\pdfoutput=1
\documentclass[a4paper,10pt]{amsart}
\usepackage{
 amsmath,
 amssymb,
 fullpage,
 mathtools,
 thmtools,
 enumerate
}
\usepackage[
 pagebackref,
 colorlinks,
 allcolors=blue
]{hyperref}

\usepackage[capitalize, noabbrev]{cleveref}

\renewcommand{\sp}{\operatorname{sp}}
\newcommand{\As}{\operatorname{As}}
\newcommand{\St}{\mathrm{St}}

\newlength{\bibitemsep}
\setlength{\bibitemsep}{0.75ex plus 0.05ex minus 0.05ex}
\newlength{\bibparskip}
\setlength{\bibparskip}{0pt}
\let\oldthebibliography\thebibliography
\renewcommand\thebibliography[1]{%
 \oldthebibliography{#1}%
 \setlength{\parskip}{\bibparskip}%
 \setlength{\itemsep}{\bibitemsep}%
}

\renewcommand{\AA}{\mathbf{A}}
\newcommand{\QQ}{\mathbf{Q}}
\newcommand{\CC}{\mathbf{C}}

\newcommand{\cI}{\mathcal{I}}
\newcommand{\cO}{\mathcal{O}}
\newcommand{\cS}{\mathcal{S}}

\newcommand{\vep}{\varepsilon}

\DeclareMathOperator{\Hom}{Hom}
\DeclareMathOperator{\Frob}{Frob}
\DeclareMathOperator{\GL}{GL}
\DeclareMathOperator{\SO}{SO}
\DeclareMathOperator{\GSp}{GSp}
\DeclareMathOperator{\PGL}{PGL}

\usepackage{bbold}
\newcommand{\triv}{\mathbb{1}}

\newtheorem{definition}{Definition}[section]
\newtheorem{theorem}[definition]{Theorem}
\newtheorem{proposition}[definition]{Proposition}
\newtheorem{lemma}[definition]{Lemma}
\newtheorem*{mainthm}{Main Theorem}

\newtheorem{subtheorem}{Theorem}[theorem]

% DL: remarks with diamond symbol at end (so it's clear where the remark ends)
\theoremstyle{remark}
\declaretheorem[name=Remark,sibling=theorem,qed={\lower-0.3ex\hbox{$\diamond$}}]{remark}

\begin{document}

\title{Gross--Prasad periods for reducible representations}
\author{David Loeffler}
\thanks{Supported by Royal Society University Research Fellowship UF160511.}
\address{Warwick Mathematics Institute, University of Warwick, Coventry CV4 7AL, UK}
\email{d.a.loeffler@warwick.ac.uk}

\begin{abstract}
 We study $\GL_2(F)$-invariant periods on representations of $\GL_2(A)$, where $F$ is a nonarchimedean local field and $A/F$ a product of field extensions of total degree 3. For irreducible representations, a theorem of Prasad shows that the space of such periods has dimension $\le 1$, and is non-zero when a certain $\vep$-factor condition holds. We give an extension of this result to a certain class of reducible representations (of Whittaker type), extending results of Harris--Scholl when $A$ is the split algebra $F \times F \times F$.
\end{abstract}

\maketitle

\section{Introduction}

 One of the central problems in the theory of smooth representations of reductive groups over nonarchimdean local fields is to determine when a representation of a group $G$ admits a linear functional invariant under a closed subgroup $H$ (an $H$-invariant period).

 The Gross--Prasad conjectures \cite{grossprasad92} give a very precise and elegant description of when such periods exist, for many natural pairs $(G, H)$, in terms of $\vep$-factors. However, the original formulation of these conjectures applies to members of \emph{generic} $L$-packets for $G$; and the analogous picture for representations in non-generic $L$-packets is rather more complex. Although the $\vep$-factor is still well-defined for all such $L$-packets, the conjecture formulated in \cite{gangrossprasad20} only applies when the $L$-parameters satisfy an additional ``relevance'' condition, raising the natural question of whether the $\vep$-factors for non-relevant $L$-packets have any signficance in terms of invariant periods.

 In this short note, we describe some computations of branching laws in the following simple case: $G$ is $\GL_2(A)$, where $A/F$ is a cubic \'etale algebra, and $H$ is the subgroup $\GL_2(F)$. Our computations suggest an alternative approach to the theory: rather than studying branching laws for non-generic irreducible representations, we focus on representations which are possibly reducible, but satisfy a certain ``Whittaker type'' condition. We show that $H$-invariant periods on these representations are unique if they exist, and that their existence is governed by $\vep$-factors, extending the results of Prasad \cite{prasad90,prasad92} for irreducible generic representations, and Harris--Scholl \cite{harrisscholl01} for $A$ the split algebra (in which case the $\vep$-factor is always $+1$). In this optic, the ``relevance'' condition appears as a criterion for the $H$-invariant period to factor through the unique irreducible quotient.

 This result, combined with other recent works such as that of Chan \cite{chan21} in the case $(G, H) = (\GL_n(F) \times \GL_{n+1}(F), \GL_n(F))$, would seem to suggest that many ``Gross--Prasad-style'' branching results should extend to Whittaker-type representations, and we hope to explore this further in future works.

 We conclude with an application to global arithmetic. For $\pi$ a Hilbert modular form over a real quadratic field, the constructions of \cite{Kings-Higher-Regulators, leiloefflerzerbes18, grossi19} give rise to a family of cohomology classes taking values in the 4-dimensional Asai Galois representation associated to $\pi$. We show that if $\pi$ is not of CM type and not a base-change from $\QQ$, then these elements all lie in a 1-dimensional subspace. This is the analogue for quadratic Hilbert modular forms of the result proved in \cite{harrisscholl01} for Beilinson's elements attached to the Rankin convolution of two modular forms.

 \subsubsection*{Acknowledgements} I am grateful to Giada Grossi and Dipendra Prasad for interesting conversations in connection with this paper, and especially to Nadir Matringe for his answer to a question of mine on MathOverflow, which provided the key to Theorem \ref{thm:asaiA}. I would also like to thank Kei Yuen Chan, for pointing out the relevance of a result of Moeglin--Waldspurger recalled in \S \ref{sect:GGP} below; and the anonymous referee, for suggesting a much cleaner proof of Theorem \ref{thm:asaiB}.
\section{Statements}

 Throughout this paper, $F$ denotes a non-archimedean local field of characteristic 0. If $G$ is a reductive group over $F$, then a ``representation'' of $G(F)$ shall mean a smooth linear representation on a complex vector space.

 \subsection{Epsilon-factors}

  We choose a nontrivial additive character $\psi$ of $F$. For Weil--Deligne representations $\rho$ of $F$, we define epsilon-factors $\vep(\rho) = \vep(\rho, \psi)$ following Langlands (the ``$\vep_L$'' convention in \cite[\S 3.6]{tate79}), so that $\vep(\rho)$ is independent of $\psi$ if $\det(\rho) = 1$. We note that
  \begin{align*}
   \vep(\rho_1 \oplus \rho_2) &= \vep(\rho_1) \vep(\rho_2), &
   \vep(\rho) \vep(\rho^\vee) &= (\det \rho)(-1)
  \end{align*}
  where $\det(\rho)$ is identified with a character of $F^\times$ via class field theory.

  We write $\sp(n)$ for the $n$-dimensional Weil--Deligne representation given by the $(n-1)$-st symmetric power of the Langlands parameter of the Steinberg representation, so that the eigenvalues of the Frobenius element on $\sp(n)$ are $q^{\tfrac{1-n}{2}}, q^{\tfrac{3-n}{2}}, \dots, q^{\tfrac{n-1}{2}}$ where $q$ is the size of the residue field.

 \subsection{The generic Langlands correspondence for $\GL_2$}

  The classical local Langlands correspondence for $\GL_2$ is a bijection between irreducible smooth representations of $\GL_2(F)$, and 2-dimensional Frobenius-semisimple representations of the Weil--Deligne group of $F$.

  In this paper, we will use the following modification of the correspondence. A representation of $\GL_2(F)$ is said to be \emph{of Whittaker type} if it is either irreducible and generic, or a reducible principal series representation with 1-dimensional quotient. (These are precisely the representations of $\GL_2(F)$ which have well-defined Whittaker models.) The \emph{generic Langlands correspondence} is a bijection between Whittaker-type representations of $\GL_2(F)$ and 2-dimensional Frobenius-semisimple Weil--Deligne representations; it agrees with the classical Langlands correspondence on irreducible generic representations, and maps a reducible Whittaker-type principal series to the classical Langlands parameter of its 1-dimensional quotient.\footnote{This correspondence was introduced in \cite{breuilschneider07}; but our conventions differ from \emph{op.cit.} by a power of the norm character, in order that our generic Langlands correspondence extend the classical one.}

  In particular, the unramified Weil--Deligne representation with Frobenius acting as $\left(\begin{smallmatrix} q^{1/2} \\ & q^{-1/2}\end{smallmatrix}\right)$ corresponds to the reducible principal series $\Sigma_F$ containing the Steinberg representation $\St_F$ as subrepresentation and trivial 1-dimensional quotient. (We omit the subscript $F$ if it is clear from context.)

 \subsection{Statement of the theorem}

  We now state our main theorem. Let $A / F$ be a separable cubic algebra, so $A$ is a product of field extensions of $F$ of total degree 3. Let $\omega_A$ be the quadratic character of $F^\times$ determined by the class of $\operatorname{disc}(A)$ in $F^\times / F^{\times 2}$. We let $G = \GL_2(A)$, and $H = \GL_2(F)$, embedded in $G$ in the obvious way.

  The Langlands dual group of $\GL_2 / A$ has a natural 8-dimensional \emph{Asai}, or \emph{multiplicative induction}, representation; in the case $A = F^3$ this is simply the tensor product of the defining representations of the factors. We use this representation, and the generic Langlands correspondence for $\GL_2$ above, to define Asai epsilon-factors $\vep(\As(\Pi))$ for Whittaker-type representations of $\GL_2(A)$.

  Finally, we consider Jacquet--Langlands transfers. Let $H' = D^\times$ where $D/F$ is the unique non-split quaternion algebra. Let $G' = (D \otimes_F A)^\times$, and let $\Pi'$ be the Jacquet--Langlands transfer of $\Pi$ to $G'$ if this exists, and 0 otherwise.

  \begin{remark}
   Note that if $A = E \times F$ for $E$ a quadratic field extension, then $D^\times$ is split over $E$, and hence $G' = \GL_2(E) \times D^\times(F)$. Thus if $\Pi = \pi \boxtimes \sigma$, for $\pi$, $\sigma$ representations of $\GL_2(E)$ and $\GL_2(F)$ respectively, we have $\Pi' = \pi \boxtimes \sigma'$. In particular, $\Pi' \ne 0$ whenever $\sigma'$ is discrete series (even if $\pi$ is principal series, possibly reducible).
  \end{remark}

  \begin{mainthm}
   Let $\Pi$ be a representation of $\GL_2(A)$ of Whittaker type, whose central character is trivial on $F^\times$ (embedded diagonally in $A^\times$). Then we have
   \[ \dim \Hom_{H}(\Pi, \triv) = \begin{cases}
    1 & \text{if $\vep(\As(\Pi)) \omega_A(-1) = 1$}, \\
    0 & \text{if $\vep(\As(\Pi)) \omega_A(-1) = -1$};
    \end{cases}
   \]
   and
   \[
    \dim \Hom_{H}(\Pi, \triv) + \dim \Hom_{H'}(\Pi', \triv) = 1. \]
  \end{mainthm}

  If $\Pi$ is an irreducible generic representation, then this is the main result of \cite{prasad90} for $A$ the split algebra, and \cite{prasad92} for non-split $A$ (modulo the case of supercuspidal representations of cubic fields, completed in \cite{prasadSP08}). The new content of the above theorem is that this also holds for reducible Whittaker-type $\Pi$.

  \begin{remark}
   Any such $\Pi$ can be written as the specialisation at $s = 0$ of an analytic family of Whittaker-type representations $\Pi(s)$ indexed by a complex parameter $s$, which are irreducible for generic $s$ and all have central character trivial on $F^\times$. For such families, the $\vep$-factors $\vep(\As \Pi(s))$ are locally constant as a function of $s$; hence, given the results of \cite{prasad92, prasadSP08} in the irreducible case, our theorem is equivalent to the assertion that $\dim \Hom_H(\Pi(s), \triv)$ and $\dim \Hom_{H'}(\Pi(s)', \triv)$ are locally constant in $s$.
  \end{remark}

 \subsection{Relation to results of Moeglin--Waldspurger}\label{sect:GGP}

  The Proposition in section 1.3 of \cite{moeglinwaldspurger12} gives a formula for branching multiplicities for certain parabolically-induced representations of special orthogonal groups $\SO(d) \times \SO(d')$ (with $d - d'$ odd), expressing these in terms of multiplicities for irreducible tempered representations of smaller special orthogonal groups. These results are applied in \emph{op.cit.}~to prove the Gross--Prasad conjecture for irreducible representations in non-tempered generic $L$-packets (by reduction to the tempered case); but the results are also valid for reducible representations.

  Since the split form of $\SO(3)$ is $\PGL(2)$, and $\SO(4)$ is closely related to $\PGL(2) \times \PGL(2)$, one can derive many cases of our Main Theorem from their result applied to various forms of $\SO(3) \times \SO(4)$. In fact, if $A = F^3$ or $A = E \times F$ for $E$ quadratic, we can obtain in this way all cases of the Main Theorem not already covered by Prasad's results.

  However, the case when $A$ is a cubic field extension does not appear to fit into the framework of \emph{op.cit.}; and the proof given in \emph{op.cit.} is rather indirect, particularly in the case when the $\SO(3)$ representation is reducible, in which case their argument requires a delicate switch back and forth between representations of $\SO(3)\times \SO(4)$ and $\SO(4)\times \SO(5)$. So we hope that the alternative, more direct approach given here will be of interest.

\section{Split triple products}

 We first put $A = F \times F \times F$.

 \begin{theorem}[Prasad, Harris--Scholl] Let $\pi_1$, $\pi_2$, $\pi_3$ be representations of $\GL_2(F)$ of Whittaker type, with central characters $\omega_i$ such that $\omega_1 \omega_2 \omega_3 = 1$. Then we have
 \[ \dim \Hom_{\GL_2(F)}(\pi_1 \otimes \pi_2 \otimes \pi_3, \triv) =
   \begin{cases}
    1 & \text{if $\vep(\pi_1 \times \pi_2 \times \pi_3) = +1$},\\
    0 & \text{if $\vep(\pi_1 \times \pi_2 \times \pi_3) = -1$},\\
   \end{cases} \]
  and
  \[ \dim \Hom_{\GL_2(F)}(\pi_1 \otimes \pi_2 \otimes \pi_3, \triv) + \dim \Hom_{D^\times(F)}(\pi'_1 \otimes \pi'_2 \otimes \pi'_3, \triv) = 1.\]
 \end{theorem}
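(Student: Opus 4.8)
The plan is to reduce everything to the case where the representations are irreducible and generic, where the results of Prasad \cite{prasad90} (for the trilinear form over $F$) and its quaternionic counterpart are already available, and Harris--Scholl \cite{harrisscholl01} supply the argument that when one of the $\pi_i$ is reducible of Whittaker type the quaternion-algebra term vanishes and the $\GL_2(F)$-Hom space has dimension exactly $1$. Concretely, I would first dispose of the case where all three $\pi_i$ are irreducible: this is literally the main theorem of \cite{prasad90}, together with the fact that the $\varepsilon$-factor $\varepsilon(\pi_1 \times \pi_2 \times \pi_3)$ computed via the generic Langlands correspondence agrees with the one appearing there (which holds because on irreducible generic representations our correspondence is the classical one). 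So I may assume at least one factor, say $\pi_3$, is the reducible Whittaker-type principal series $\Sigma$ (up to twist), i.e. it sits in an exact sequence $0 \to \St \otimes \chi \to \pi_3 \to \chi \to 0$ for some character $\chi$, and its generic Langlands parameter is the reducible unramified-twist $\chi \oplus \chi$ in the relevant normalization, so $\pi_3' = 0$ and the quaternionic term is automatically zero; it remains to show $\dim \Hom_{\GL_2(F)}(\pi_1 \otimes \pi_2 \otimes \pi_3, \triv) = 1$ and that $\varepsilon(\pi_1 \times \pi_2 \times \pi_3) = +1$ in this case.

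For the $\varepsilon$-factor, since the parameter of $\pi_3$ is $\chi \oplus \chi$ (up to the norm twist built into the generic correspondence), we get $\varepsilon(\pi_1 \times \pi_2 \times \pi_3) = \varepsilon\bigl(\sigma_1 \otimes \sigma_2 \otimes (\chi \oplus \chi)\bigr) = \varepsilon(\sigma_1 \otimes \sigma_2 \otimes \chi)^2 \cdot \bigl(\text{a unit of absolute value }1\bigr)$, and using $\varepsilon(\rho)\varepsilon(\rho^\vee) = (\det \rho)(-1)$ together with the central-character relation $\omega_1\omega_2\omega_3 = 1$ one checks the product is $+1$; this is the ``split" mechanism whereby the $\varepsilon$-factor is always trivial once a reducible factor is present, matching the parenthetical remark in the introduction. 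For the dimension count, I would use the standard description of $\Hom_{\GL_2(F)}(\pi_1 \otimes \pi_2 \otimes \pi_3, \triv)$ via the geometry of $\GL_2(F)$ acting on $\mathbf{P}^1 \times \mathbf{P}^1 \times \mathbf{P}^1$ (the Bruhat--Mackey filtration used by Prasad): the open orbit contributes a one-dimensional space of $\GL_2(F)$-invariant distributions, and the remaining orbits (the small and large diagonals) contribute obstruction and extension terms which, for generic $\pi_i$, one shows vanish. When $\pi_3$ is the reducible $\Sigma$ one simply repeats the orbit analysis, or — more cleanly — one deforms $\pi_3$ in an analytic family $\pi_3(s)$ of irreducible generic representations degenerating to $\Sigma$ at $s = 0$, and invokes the local constancy of $\dim \Hom$ (upper semicontinuity plus the known generic value $1$) to conclude that the limit is also $1$; this is precisely the Harris--Scholl argument, and is the cleanest route.

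The main obstacle is the local constancy / semicontinuity input: a priori $\dim \Hom_{\GL_2(F)}(\pi_1(s) \otimes \pi_2(s) \otimes \pi_3(s), \triv)$ is only \emph{upper} semicontinuous in the family parameter, so it could in principle jump \emph{up} at $s = 0$ and exceed $1$. Ruling this out requires an independent upper bound of $1$ for the reducible case. Here I would run the Bruhat--Mackey orbit argument directly for $\Pi = \pi_1 \otimes \pi_2 \otimes \Sigma$: the contribution of each non-open orbit $\mathcal{O}$ is controlled by $\Hom$ or $\operatorname{Ext}^1$ groups between twists of the Jacquet modules of the $\pi_i$ at that orbit, and one must check that passing from an irreducible generic $\pi_3$ to the reducible $\Sigma$ (whose Jacquet module is larger, containing the extra trivial-character constituent) does not introduce new contributions — which works because the central-character condition $\omega_1\omega_2\omega_3 = 1$ forces the relevant characters to be in ``general position'' with respect to the extra constituent. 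Granting this bound, combined with the generic value $1$ from Prasad along the family, local constancy follows and the theorem is proved; the quaternionic identity is then immediate from $\pi_3' = 0$.
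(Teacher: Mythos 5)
Your overall strategy---reduce to the irreducible case via Prasad, and then argue that reducible Whittaker-type factors force $\varepsilon = +1$ and kill the Jacquet--Langlands transfer---is the same as the paper's. But there are two places where your argument is incomplete in a way that matters.

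First, a point of characterization: you describe the Harris--Scholl route as a ``deformation plus local constancy'' argument, then correctly observe this only gives upper semicontinuity and that an independent upper bound $\le 1$ is needed. In fact Harris--Scholl do not argue by deformation at all---Propositions 1.5--1.7 of their paper are a direct, case-by-case analysis of composition factors, using Prasad's $\varepsilon$-factor results to locate which composition factor of $\pi_1 \otimes \pi_2 \otimes \pi_3$ can carry a period, plus explicit Mackey-theory computations for the awkward all-principal-series configurations. The paper's remark about local constancy is offered as a \emph{reformulation} of the theorem, not as the method of proof.

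Second, and more seriously, the upper bound $\le 1$ is exactly where your proposal is hand-waved. You write that the orbit-by-orbit Bruhat--Mackey analysis ``works because the central-character condition forces the relevant characters to be in general position with respect to the extra constituent.'' This is not true in all cases, and the case where it fails is precisely the one Harris--Scholl omit and the paper has to supply: when all three $\pi_i$ are twists of $\Sigma_F$. After twisting, one is reduced to $\pi_2 = \pi_3 = \Sigma_F$ and $\pi_1 = \Sigma_F \otimes \eta$ with $\eta^2 = 1$. Here the characters are emphatically \emph{not} in general position---the extra constituents are trivial or quadratic twists of one another---and one must verify by hand that $\Hom_H(\eta \otimes \Sigma_F \otimes \Sigma_F, \triv)= \Hom_H(\Sigma_F, \Sigma_F^\vee \otimes \eta) = 0$ for nontrivial quadratic $\eta$, so that the Hom space injects into $\Hom_H(\eta\St_F \otimes \Sigma_F \otimes \Sigma_F, \triv)$, which Harris--Scholl show is $1$-dimensional. (The case $\eta = 1$ is their Proposition 1.7.) Without this explicit computation, the theorem is not fully proved; and your deformation argument cannot substitute for it since, as you yourself note, it requires the bound as an input. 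Minor additional point: the generic Langlands parameter of $\Sigma_F \otimes \chi$ is $\chi|\cdot|^{1/2} \oplus \chi|\cdot|^{-1/2}$, not $\chi \oplus \chi$, so your $\varepsilon$-factor computation should proceed by pairing the two summands via $\varepsilon(\tau)\varepsilon(\tau^\vee) = (\det\tau)(-1)$ rather than by squaring $\varepsilon(\sigma_1 \otimes \sigma_2 \otimes \chi)$; the conclusion $\varepsilon = +1$ is correct but the intermediate identity as written is not.
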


 If the $\pi_i$ are all irreducible, then the above is the main result of \cite{prasad90}. If one or more of the $\pi_i$ is isomorphic to a twist of $\Sigma_F$, then the $\vep$-factor is automatically $+1$, and $\pi'_1 \otimes \pi'_2 \otimes \pi'_3$ is the zero representation. So all that remains to be shown is that in this case we have $\dim \Hom_{\GL_2(F)}(\pi_1 \otimes \pi_2 \otimes \pi_3, \triv)= 1$.  This is estabished in Propositions 1.5, 1.6 and 1.7 of \cite{harrisscholl01}, except for one specific case, which is when all three of the $\pi_i$ are twists of $\Sigma$ by characters.

 In this case, by twisting we may assume $\pi_2 = \pi_3 = \Sigma$ and $\pi_1 = \Sigma \otimes \eta$ where $\eta$ is a character of $F^\times$ with $\eta^2 = 1$. The case $\eta = 1$ is covered by Proposition 1.7 of \emph{op.cit.}, so we assume $\eta$ is a nontrivial quadratic character. In this case $\Hom_H( \eta \otimes \Sigma_F \otimes \Sigma_F, \triv) = \Hom_H(\Sigma_F, \Sigma_F^\vee \otimes \eta) = 0$, so $\Hom_H(\pi_1 \otimes \pi_2 \otimes \pi_3, \triv)$ injects into $\Hom_H( \eta \St_F \otimes \Sigma_F \otimes \Sigma_F, \triv)$, which has dimension 1 by \cite[Proposition 1.6]{harrisscholl01}. Thus $\Hom_H(\pi_1 \otimes \pi_2 \otimes \pi_3, \triv)$ has dimension $\le 1$. Since one can easily write down a nonzero element of this space using the Rankin--Selberg zeta integral, we conclude that its dimension is 1 as required.

\section{Quadratic fields}

 We now suppose $A = E \times F$ with $E/F$ quadratic, so $\Pi = \pi \boxtimes \sigma$ for Whittaker-type representations $\pi$ of $\GL_2(E)$ and $\sigma$ of $\GL_2(F)$ such that $\omega_{\pi}|_{F^\times} \cdot \omega_{\sigma} = 1$. Since the case of $\pi$, $\sigma$ irreducible is proved in \cite{prasad92}, it suffices to consider the following cases:
 \begin{enumerate}[(a)]
 \item $\pi$ is irreducible and $\sigma = \Sigma_F$.
 \item $\sigma$ is irreducible and $\pi = \Sigma_E$;
 \item $\pi = \Sigma_E$ and $\sigma = \Sigma_F \otimes \eta$, where $\eta$ is a quadratic character.
 \end{enumerate}

 In cases (a) and (c), we always have $\vep(\As(\pi) \times \sigma)\vep_{E/F}(-1) = 1$, and $\sigma' = \{0\}$, so the Main Theorem amounts to the assertion that $\dim \Hom_H(\pi \boxtimes \sigma, \triv) = 1$. In case (b), both signs can occur.

 \addtocounter{theorem}{1}
 \begin{subtheorem}
  \label{thm:asaiA}
  Let $\pi$ be an irreducible generic representation of $\GL_2(E)$ such that $\omega_{\pi} |_{F^\times} = 1$. Then we have $\dim \Hom_H(\pi \boxtimes \Sigma_F, \triv) = 1$.
 \end{subtheorem}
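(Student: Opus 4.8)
The plan is to reduce the problem, via Frobenius reciprocity and the hypothesis on the central character, to a distinction question for the mirabolic subgroup of $\GL_2(F)$ inside $\GL_2(E)$, and then to settle that question using the Kirillov model of $\pi$. Write $H=\GL_2(F)$. Since $(\pi\boxtimes\Sigma_F)|_H=\pi|_H\otimes\Sigma_F$, we have $\Hom_H(\pi\boxtimes\Sigma_F,\triv)=\Hom_H(\pi\otimes\Sigma_F,\triv)=\Hom_H(\pi,\Sigma_F^\vee)$. The contragredient $\Sigma_F^\vee$ is the normalised principal series $i_{B(F)}^{\GL_2(F)}(|\cdot|_F^{-1/2}\otimes|\cdot|_F^{1/2})$, so by Frobenius reciprocity
\[
\Hom_H(\pi,\Sigma_F^\vee)=\Hom_{B(F)}\bigl(\pi,\ (|\cdot|_F^{-1/2}\otimes|\cdot|_F^{1/2})\,\delta_{B(F)}^{1/2}\bigr)=\Hom_{B(F)}(\pi,\triv),
\]
because $\delta_{B(F)}^{1/2}(\stbt{a}{*}{0}{d})=|a/d|_F^{1/2}$ cancels the inducing character exactly. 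Since $\omega_\pi|_{F^\times}=1$, the centre of $\GL_2(F)$ acts trivially on $\pi$; writing $B(F)=Z(F)\,P(F)$ with $P(F)=\{\stbt{a}{b}{0}{1}\}$ the mirabolic subgroup, $N(F)=\{\stbt{1}{b}{0}{1}\}$ and $T'(F)=\{\stbt{a}{0}{0}{1}\}\cong F^\times$, we are reduced to proving $\dim\Hom_{F^\times}(\pi_{N(F)},\triv)=1$, where $\pi_{N(F)}$ is the $N(F)$-coinvariant space of $\pi$, regarded as a smooth $F^\times$-module.

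Realise $\pi$ on its Kirillov model $\mathcal K(\pi)\subset C^\infty(E^\times)$ with respect to $\psi_E=\psi\circ\tr_{E/F}$; recall that $C_c^\infty(E^\times)\subset\mathcal K(\pi)$ is a $P(E)$-subrepresentation with finite-dimensional quotient $Q$ (the Jacquet module of $\pi$, on which $N(E)$ acts trivially). For $b\in F\subset E$, the element $\stbt{1}{b}{0}{1}$ acts on $\mathcal K(\pi)$ by $\xi(x)\mapsto\psi(b\,\tr_{E/F}(x))\,\xi(x)$. A Fourier-averaging argument shows that $C_c^\infty(\{x\in E^\times:\tr_{E/F}(x)\ne 0\})$ lies in the $N(F)$-augmentation submodule of $C_c^\infty(E^\times)$, and that this submodule is killed by restriction of functions to the trace-zero set $L^\times=\{x\in E^\times:\tr_{E/F}(x)=0\}$; hence $C_c^\infty(E^\times)_{N(F)}\cong C_c^\infty(L^\times)$. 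As $L=\ker(\tr_{E/F}\colon E\to F)$ is a one-dimensional $F$-line, $L^\times\cong F^\times$ and $T'(F)$ acts by translation, so this is the regular representation of $F^\times$. One checks moreover that the augmentation submodules of $\mathcal K(\pi)$ and of $C_c^\infty(E^\times)$ coincide — the asymptotic part of a Kirillov function is annihilated near $0$ by every $\stbt{1}{b}{0}{1}-1$ with $b\in F$. Thus there is an exact sequence of $F^\times$-representations
\[
0\ \longrightarrow\ C_c^\infty(F^\times)\ \longrightarrow\ \pi_{N(F)}\ \longrightarrow\ Q|_{F^\times}\ \longrightarrow\ 0.
\]

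Now apply $\Hom_{F^\times}(-,\triv)$. Since $F^\times$ has cohomological dimension $1$ and smooth $F^\times$-modules split over the unit group, for a character $\lambda$ of $F^\times$ both $\Hom_{F^\times}(\lambda,\triv)$ and $\operatorname{Ext}^1_{F^\times}(\lambda,\triv)$ vanish unless $\lambda=\triv$, in which case each is one-dimensional; as $Q|_{F^\times}$ is semisimple or a single Jordan block this gives $\dim\Hom_{F^\times}(Q|_{F^\times},\triv)=\dim\operatorname{Ext}^1_{F^\times}(Q|_{F^\times},\triv)=:d$. One has $d\le 1$: $Q$ has dimension $\le 2$, and $d=2$ would force $Q$ to be two-dimensional semisimple with both exponents trivial on $F^\times$, contradicting that the product of its two exponents equals $\omega_\pi\cdot|\cdot|_E$, which restricts to $|\cdot|_F^{2}\ne\triv$ since $\omega_\pi|_{F^\times}=1$ (the case of a twist of Steinberg is excluded because then $Q$ is one-dimensional). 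Also $\Hom_{F^\times}(C_c^\infty(F^\times),\triv)$ is one-dimensional, spanned by the Haar integral $\xi\mapsto\int_{F^\times}\xi\,d^\times t$, so the long exact sequence yields $\dim\Hom_{F^\times}(\pi_{N(F)},\triv)=d+1-\operatorname{rank}\partial$ with $\partial\colon\mathbb C\to\operatorname{Ext}^1_{F^\times}(Q|_{F^\times},\triv)$, and it remains to check $\operatorname{rank}\partial=d$. If $d=0$ the target vanishes. If $d=1$, the Haar integral cannot extend $F^\times$-invariantly to $\pi_{N(F)}$: it would then extend to the submodule $M\subset\pi_{N(F)}$ which is the preimage of the trivial summand of $Q|_{F^\times}$, but a generator $f_0$ of $M$ over $C_c^\infty(F^\times)$ — taken to be the characteristic function of $\varpi\mathcal O_F$ on $L^\times\cong F^\times$ — satisfies $\stbt{\varpi}{0}{0}{1}\cdot f_0-f_0=\mathbf 1_{\mathcal O_F^\times}$, whose Haar integral is $\operatorname{vol}(\mathcal O_F^\times)\ne 0$. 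Hence $\partial\ne 0$, $\operatorname{rank}\partial=1=d$, and the dimension is $1$ in all cases.

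The substantial step is the identification of $\pi_{N(F)}$: the key point is that its ``principal part'' $C_c^\infty(E^\times)_{N(F)}$ is the \emph{entire} regular representation of $F^\times$, which forces one to localise the $N(F)$-coinvariants onto the trace-zero line via the relation $\psi_E(bx)=\psi(b\,\tr_{E/F}(x))$ for $b\in F$ (this is the $\GL_2$ instance of the fact I learned from Matringe). Granting that structure, the rest is a short diagram chase whose only delicate point is the non-vanishing of $\partial$ when $d=1$, which the explicit coboundary above settles.
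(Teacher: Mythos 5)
Your reduction to $\Hom_{F^\times}(\pi_{N(F)},\triv)$ matches the first step of the paper's argument, but from there you take a genuinely different and, I think, cleaner route. The paper splits into cases: for tempered $\pi$ it cites Anandavardhanan--Kable--Tandon (Theorem 1.1 of \cite{AKT04}) for the multiplicity-one statement $\dim\Hom_{M(F)}(\pi,\triv)=1$, and it gets non-vanishing from a separate Asai zeta-integral argument; for non-tempered principal series $\pi=I_E(\alpha,\beta)$ it instead combines Prasad's and Matringe's formulas for $\Hom_H(\pi\boxtimes\St_F,\triv)$ and $\Hom_H(\pi\boxtimes\triv,\triv)$, checking that the one leftover configuration is forced to be tempered. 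You replace all of this by a direct, uniform computation of $\pi_{N(F)}$ from the Kirillov model: the action of $N(F)$ is multiplication by $\psi(b\,\tr_{E/F}(x))$, so the coinvariants localise onto the trace-zero line $L^\times\cong F^\times$, giving the exact sequence $0\to C_c^\infty(F^\times)\to\pi_{N(F)}\to Q\to 0$ with $Q$ the germ quotient. The dimension count then handles upper and lower bounds and all cases of $\pi$ (supercuspidal, Steinberg, tempered or non-tempered principal series) in one stroke, using only the central-character constraint $\omega_\pi|_{F^\times}=1$ to show $d\le 1$ and (via $|\cdot|_F^2\ne\triv$) to rule out the Jordan-block pathology. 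The one delicate step — non-vanishing of the connecting map $\partial$ when $d=1$ — you settle by an explicit coboundary; that computation is right, though the phrasing is a bit loose: $f_0$ should really be taken as the class of a Kirillov function $\xi_0\in\mathcal K(\pi)$ whose germ spans the trivial line of $Q$ and whose restriction to $L^\times$ equals $c\,\mathbf 1_{\varpi\mathcal O_F\smallsetminus\{0\}}$ with $c\ne 0$ (such a $\xi_0$ exists precisely because the relevant exponent restricts trivially to $F^\times$), rather than literally ``the characteristic function of $\varpi\mathcal O_F$''. It is worth noting that both arguments ultimately rest on the same Matringe-type observation about localising $N(F)$-coinvariants on the trace-zero line — the paper's acknowledgements say as much, and \cite{AKT04} is where the paper outsources it — but your version is self-contained and avoids the tempered/non-tempered case split and the appeal to zeta integrals.
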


 \begin{remark}
  Note that the case when $E / F$ is unramified, and $\pi$ is unramified and tempered, is part of \cite[Theorem 4.1.1]{grossi19}. However, the proof of this statement \emph{loc.cit.}~has a minor error which means the argument does not work when $\pi$ is the normalised induction of the trivial character of $B_E$. So the argument below fixes this small gap.
 \end{remark}

 \begin{proof}

  We first observe that $\Hom_H(\pi \boxtimes \Sigma_F, \triv)$ is non-zero. Since $\pi$ is generic, it has a Whittaker model $\mathcal{W}(\pi)$ with respect to any non-trivial additive character of $E$. We may suppose that this additive character is trivial on $F$, so that we may define the Asai zeta-integral
  \[ Z(W, \Phi, s) = \int_{N_H \backslash H} W(h)\Phi( (0, 1) h) |\det h|^{s}\ \mathrm{d}h,\]
  for $W \in \mathcal{W}(\pi)$ and $\Phi \in \cS(F^2)$ (the space of Schwartz functions on $F$). Here $N_H$ is the upper-triangular unipotent subgroup of $H$.

  It is well known that this integral converges for $\Re(s) \gg 0$ and has meromorphic continuation to the whole complex plane; and the values of $Z(-, -, s)$ span a nonzero fractional ideal of $\CC[q^s, q^{-s}]$, generated by an $L$-factor independent of $\Phi$ and $W$, which is the Asai $L$-factor $L(\As(\pi), s)$. Thus the map
  \[ \tag{\dag} (W, \Phi) \mapsto \lim_{s \to 0} \frac{Z(W, \Phi, s)}{L(\As(\pi), s)} \]
  defines a non-zero, $H$-invariant bilinear form $W(\pi) \otimes \cS(F^2) \to \CC$. Since the maximal quotient of $\cS(F^2)$ on which $F^\times$ acts trivially is isomorphic to $\Sigma_F$ (see e.g.~\cite[Proposition 3.3(b)]{loeffler-zeta1}), this shows that $\Hom_H(\pi \boxtimes \Sigma_F, \triv) \ne 0$ as claimed.

  So, to prove \cref{thm:asaiA}, it suffices to show that $\dim \Hom_H(\pi \boxtimes \Sigma_F, \triv) \le 1$. As $\pi$ has unitary central character, it is either a discrete-series representation, in which case it is automatically tempered, or an irreducible principal series, which may or may not be tempered. We shall consider these cases separately.

  Theorem 1.1 of \cite{AKT04} states that if $\pi$ is an irreducible tempered representation of $\GL_2(E)$, then we have $\dim \Hom_{M(F)}(\pi, \triv) = 1$, where $M(F) = \{ \left(\begin{smallmatrix} \star&\star\\ 0 & 1\end{smallmatrix}\right)\}$ is the mirabolic subgroup of $\GL_2(F)$. If we assume $\omega_{\pi} |_{F^\times} = 1$, then since $F^\times \cdot M(F) = B(F)$ is the Borel subgroup of $\GL_2(F)$, we have
  \[ \Hom_{M(F)}(\pi, \triv) = \Hom_{B(F)}(\pi, \triv) = \Hom_{H}(\pi, \operatorname{Ind}_{B(F)}^{H}(\triv)).\]
  As $\operatorname{Ind}_{B(F)}^{H}(\triv) = \Sigma_F^\vee$, this proves \cref{thm:asaiA} for tempered $\pi$.

  We now consider the principal-series case. For $\alpha, \beta$ smooth characters of $E^\times$, we write $I_E(\alpha, \beta)$ for the normalised induction to $\GL_2(E)$ of the character $\alpha \boxtimes \beta$ of $B(E)$. Note that this representation is tempered if and only if $\alpha$ and $\beta$ are unitary. We suppose $\alpha/\beta \ne |\cdot|_E^{\pm 1}$ and $\alpha\beta|_{F^\times} = 1$. Then we have the following results:
  \begin{itemize}
  \item $\Hom_H(\pi \boxtimes \St_F, \triv)$ is zero if $\alpha \beta^c = 1$, and one-dimensional otherwise, where $\beta^c$ denotes the character $x \mapsto \beta(x^c)$. See \cite[Remark 4.1.1]{prasad92}.
  \item $\Hom_H(\pi \boxtimes \triv, \triv)$ is one-dimensional if $\alpha \beta^c = 1$, or if $\alpha|_{F^\times} = \beta|_{F^\times} = 1$; otherwise it is zero. See \cite[Theorem 5.2]{matringe11}.
  \end{itemize}

  We conclude that exactly one of $\Hom_H(\pi \boxtimes \St_F, \triv)$ and $\Hom_H(\pi \boxtimes \triv, \triv)$ is nonzero (and \cref{thm:asaiA} therefore follows), \emph{unless} $\pi$ is of the form $I_E(\alpha, \beta)$ with $\alpha|_{F^\times} = \beta|_{F^\times} = 1$ and $\alpha\beta^c \ne 1$. However, in this exceptional case $\alpha$ and $\beta$ are unitary, and thus $\pi$ is tempered, so \cref{thm:asaiA} has already been established for $\pi$ above. This completes the proof of \cref{thm:asaiA}.
  \end{proof}

  \begin{remark}
   It follows, in particular, that for a generic irreducible representation $\pi$ of $\GL_2(E)$, we have $\Hom_H(\pi, \triv) \ne 0$ (i.e.~$\pi$ is ``$F$-distinguished'') if and only if the zeta-integral $(\dag)$ factors through the 1-dimensional quotient of $\Sigma_F$, and thus vanishes on all $\Phi$ with $\Phi(0, 0) = 0$; that is, $s = 0$ is an \emph{exceptional pole} of the Asai $L$-factor. This is the $n = 2$ case of a theorem due to Matringe \cite[Theorem 3.1]{matringe10} applying to $\GL_n(E)$-representations. See \cite{loeffler-zeta1} for analogous results and conjectures regarding poles of zeta-integrals for $\GSp_4$ and $\GSp_4 \times \GL_2$.
  \end{remark}

  For case (b) of the main theorem, we need the following lemma:

  \begin{lemma}
   Let $\sigma$ be an irreducible generic representation of $\GL_2(F)$ with $\omega_{\sigma} = 1$. Then $\vep(\As(\Sigma_E) \times \sigma) = \vep(\sigma) \vep(\sigma \times \omega_{E/F})$. Moreover, if $\sigma \ne \St_F$, then we have
   \[ \vep(\sigma) \vep(\sigma \times \omega_{E/F}) = \vep(\As(\St_E) \times \sigma), \]
   while for $\sigma = \St_F$ we have $\vep(\As(\St_E) \times \St_F)\omega_{E/F}(-1) = 1$ and $\vep(\As(\Sigma_E) \times \St_F)\omega_{E/F}(-1) = -1$.
  \end{lemma}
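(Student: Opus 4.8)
The plan is to translate everything into Weil--Deligne representations via the generic Langlands correspondence; the one substantive step is the computation of $\As(\St_E)$ as a Weil--Deligne representation of $W_F$. Write $\rho_0=|\cdot|_E^{1/2}\oplus|\cdot|_E^{-1/2}$, so the parameter of $\Sigma_E$ is $(\rho_0,0)$ and that of $\St_E$ is $\sp(2)_E=(\rho_0,N)$: the same underlying Weil representation, but with non-trivial monodromy. From $\As(\eta|_{W_E})=\eta^2$ for a character $\eta$ of $W_F$ (projection formula for multiplicative induction, with $\As(\triv_{W_E})=\triv_{W_F}$), the decomposition $\As(\tau_1\oplus\tau_2)=\As(\tau_1)\oplus\As(\tau_2)\oplus\operatorname{Ind}_{W_E}^{W_F}(\tau_1\otimes\tau_2^c)$, and $\operatorname{Ind}_{W_E}^{W_F}(\triv)=\triv\oplus\omega_{E/F}$, one gets $\As(\Sigma_E)=|\cdot|_F\oplus|\cdot|_F^{-1}\oplus\triv\oplus\omega_{E/F}$ with trivial monodromy. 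For $\St_E$ the underlying Weil representation of $\As(\sp(2)_E)$ is this same $4$-dimensional space, but $\As(\sp(2)_E)|_{W_E}=\operatorname{Sym}^2\sp(2)_E\oplus\wedge^2\sp(2)_E=\sp(3)_E\oplus\triv_E$ shows the monodromy is non-zero; comparing Frobenius eigenvalues against the relation $\Ad(w)N=\|w\|N$ then shows $N$ annihilates the $\omega_{E/F}$-summand and carries $|\cdot|_F^{-1}\mapsto\triv\mapsto|\cdot|_F$, whence $\As(\St_E)=\sp(3)_F\oplus\omega_{E/F}$ (the analogue of $\St_F\otimes\St_F=\sp(2)\otimes\sp(2)=\sp(3)\oplus\sp(1)$ in the split case).

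Since $\omega_\sigma$ is trivial, $\sigma$ is self-dual, so $\varepsilon(\sigma\otimes|\cdot|_F)\,\varepsilon(\sigma\otimes|\cdot|_F^{-1})=\varepsilon(\sigma\otimes|\cdot|_F)\,\varepsilon((\sigma\otimes|\cdot|_F)^\vee)=\det(\sigma\otimes|\cdot|_F)(-1)=\omega_\sigma(-1)=1$. Multiplying out $\As(\Sigma_E)\otimes\sigma$ gives the first assertion $\varepsilon(\As(\Sigma_E)\times\sigma)=\varepsilon(\sigma)\,\varepsilon(\sigma\times\omega_{E/F})$, and the same cancellation gives $\varepsilon((|\cdot|_F\oplus\triv\oplus|\cdot|_F^{-1})\otimes\sigma)=\varepsilon(\sigma)$. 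Hence, using $\As(\St_E)=\sp(3)_F\oplus\omega_{E/F}$, the second assertion reduces to the claim that $\varepsilon(\sp(3)_F\otimes\sigma)=\varepsilon(\sigma)$ whenever $\sigma\ne\St_F$, i.e.\ that the monodromy of $\sp(3)_F$ does not affect the $\varepsilon$-factor after tensoring with $\sigma$. I would check the three types of irreducible generic $\sigma$ with trivial central character. For $\sigma=I_F(\chi,\chi^{-1})$: $\sp(3)_F\otimes\sigma=(\sp(3)_F\otimes\chi)\oplus(\sp(3)_F\otimes\chi)^\vee$ has $\varepsilon$-factor $\det(\sp(3)_F\otimes\chi)(-1)=\chi(-1)=\varepsilon(\sigma)$. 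For $\sigma$ supercuspidal: the parameter is ramified, so $(\sp(3)_F\otimes\sigma)^{I_F}=0$, the monodromy contributes no correction, and $\varepsilon(\sp(3)_F\otimes\sigma)=\varepsilon((|\cdot|_F\oplus\triv\oplus|\cdot|_F^{-1})\otimes\sigma)=\varepsilon(\sigma)$. For $\sigma=\St_F\otimes\eta$ with $\eta$ a non-trivial quadratic character: Clebsch--Gordan gives $\sp(3)_F\otimes\sigma=(\sp(4)_F\oplus\sp(2)_F)\otimes\eta$, so $\varepsilon(\sp(3)_F\otimes\sigma)=\varepsilon(\sp(4)_F\otimes\eta)\,\varepsilon(\sigma)$, and $\varepsilon(\sp(4)_F\otimes\eta)=1$: for $\eta$ ramified by pairing up the four unramified-character constituents of $\sp(4)_F\otimes\eta$, and for $\eta$ unramified from $\varepsilon(\sp(n)_F)=(-1)^{n-1}$ together with the unramified-twist formula $\varepsilon(\sp(4)_F\otimes\eta)=\eta(\varpi)^{3}\varepsilon(\sp(4)_F)$ ($\sp(4)_F$ having conductor $3$).

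For $\sigma=\St_F$ the same ingredients give $\varepsilon(\sp(3)_F\otimes\St_F)=\varepsilon(\sp(4)_F)\,\varepsilon(\sp(2)_F)=(-1)(-1)=1$, the single discrepancy with $\varepsilon(\St_F)=-1$. For the two displayed values I would use the identity
\[ \varepsilon(\St_F)\,\varepsilon(\St_F\times\omega_{E/F})=\varepsilon\bigl(\operatorname{Ind}_{W_E}^{W_F}(\St_F|_{W_E})\bigr)=\varepsilon(\St_E)\,\lambda_{E/F}(\psi)^2=\varepsilon(\St_E)\,\omega_{E/F}(-1)=-\,\omega_{E/F}(-1), \]
which comes from $\St_F\otimes\operatorname{Ind}_{W_E}^{W_F}\triv=\operatorname{Ind}_{W_E}^{W_F}(\St_F|_{W_E})$, $\St_F|_{W_E}=\sp(2)_E$, inductivity of $\varepsilon$-factors in degree $0$, $\lambda_{E/F}(\psi)^2=\omega_{E/F}(-1)$, and $\varepsilon(\St_E)=-1$. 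Then $\varepsilon(\As(\Sigma_E)\times\St_F)\,\omega_{E/F}(-1)=\varepsilon(\St_F)\,\varepsilon(\St_F\times\omega_{E/F})\,\omega_{E/F}(-1)=-1$, while $\varepsilon(\As(\St_E)\times\St_F)=\varepsilon(\sp(3)_F\otimes\St_F)\,\varepsilon(\St_F\times\omega_{E/F})=\varepsilon(\St_F\times\omega_{E/F})=\omega_{E/F}(-1)$, so $\varepsilon(\As(\St_E)\times\St_F)\,\omega_{E/F}(-1)=1$.

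The hard part will be Step 1: pinning down $\As(\St_E)$ precisely --- that $\omega_{E/F}$ is exactly the summand killed by the monodromy, and that the remaining three lines carry a single length-$3$ Jordan block pointing as forced by $\Ad(w)N=\|w\|N$ --- which needs careful bookkeeping of Frobenius eigenvalues. After that the argument is formal: multiplicativity of $\varepsilon$, the relation $\varepsilon(\rho)\varepsilon(\rho^\vee)=\det\rho(-1)$, Clebsch--Gordan for $\sp(m)\otimes\sp(n)$, inductivity, and the numerical inputs $\varepsilon(\sp(n)_F)=(-1)^{n-1}$ and $\lambda_{E/F}(\psi)^2=\omega_{E/F}(-1)$.
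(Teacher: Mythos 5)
Your proposal is correct, and it rests on the same core identity $\As(\St_E) = \sp(3)_F \oplus \omega_{E/F}$ (with $\As(\Sigma_E)$ the corresponding Weil representation with trivial monodromy) that the paper uses, together with multiplicativity and $\varepsilon(\rho)\varepsilon(\rho^\vee) = \det(\rho)(-1)$. Where you differ is in how you handle the monodromy's effect on $\varepsilon(\sp(3)_F\otimes\sigma)$: the paper invokes the general correction formula in the single line
\[ \varepsilon(\As(\St_E)\times\sigma) = \varepsilon(\sigma\times\omega_{E/F})\,\varepsilon(\sigma)^3\det\bigl(-\operatorname{Frob}\colon\rho_\sigma^{I_F}\bigr)^2, \]
then observes that the last factor is always $1$ because $\det\rho_\sigma = 1$ forces $\rho_\sigma^{I_F}$ to be either $0$ or everything, whereas you instead run a case-by-case computation (principal series, supercuspidal, ramified twist of Steinberg) using Clebsch--Gordan, the conductor of $\sp(n)$, and the unramified twist formula. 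The paper's route is shorter and more uniform; yours is more elementary and has the advantage of making each numerical input transparent. You also give more detail than the paper on two points the paper essentially leaves to the reader: a derivation of the decomposition $\As(\St_E) = \sp(3)_F\oplus\omega_{E/F}$ from $\As(\sp(2)_E)|_{W_E} = \operatorname{Sym}^2\oplus\wedge^2$ plus the grading constraint $\Ad(w)N = \|w\|N$, and a careful treatment of the $\sigma=\St_F$ case via the inductivity identity $\varepsilon(\St_F)\varepsilon(\St_F\times\omega_{E/F}) = \varepsilon(\St_E)\lambda_{E/F}(\psi)^2 = -\omega_{E/F}(-1)$, which the paper does not spell out at all. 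So yours is a complete proof along the same lines, just less compressed.
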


  \begin{proof}
   If $\sigma$ is not a twist of Steinberg, then its Weil--Deligne representation has trivial monodromy action, so we compute that
   \[ \vep(\As(\St_E) \times \sigma) = \vep( (\sp(3) \oplus \omega_{E/F}) \times \sigma) = \vep(\sigma \times \omega_{E/F}) \vep(\sigma)^3 \det(-\Frob: \rho_{\sigma}^{I_F})^2.\]
   Since $\sigma$ has trivial central character, $\vep(\sigma) = \pm 1$. If $\sigma$ is supercuspidal we are done, since in this case $\rho_{\sigma}^{I_F} = 0$. If $\sigma$ is principal series, then $\rho_{\sigma}^{I_F}$ must be either 0, or all of $\rho_\sigma$, since $\rho_{\sigma}$ has determinant 1. Thus $\det(-\Frob: \rho_{\sigma}^{I_F}) = 1$, so $\vep(\As(\St_E) \times \sigma) = \vep(\sigma) \vep(\sigma \times \omega_{E/F})$, proving the claim in this case. The case when $\sigma$ is a twist of the Steinberg by a nontrivial (necessarily quadratic) character can be computed similarly.
  \end{proof}

  \setcounter{theorem}{1}
  \setcounter{subtheorem}{1}
  \begin{subtheorem}
   \label{thm:asaiB}
   Let $\sigma$ be an irreducible generic representation of $\GL_2(F)$ with $\omega_{\sigma} = 1$. Then
   \begin{enumerate}[(i)]
    \item If $\vep(\sigma) \vep(\sigma \times \omega_{E/F}) = \omega_{E/F}(-1)$, then $\dim \Hom_H(\Sigma_E \boxtimes \sigma, \triv) = 1$ and $\Hom_{H'}(\Sigma_E \boxtimes \sigma', \triv) = 0$.
    \item If $\vep(\sigma) \vep(\sigma \times \omega_{E/F}) = -\omega_{E/F}(-1)$, then $\Hom_H(\Sigma_E \boxtimes \sigma, \triv) = 0$ and $\dim \Hom_{H'}(\Sigma_E \boxtimes \sigma', \triv) = 1$.
   \end{enumerate}
  \end{subtheorem}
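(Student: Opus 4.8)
The plan is a dévissage along the exact sequence $0 \to \St_E \to \Sigma_E \to \triv_E \to 0$, which should reduce the statement to Prasad's theorem for the irreducible generic representation $\St_E \boxtimes \sigma$ together with the $\varepsilon$-factor identities of the lemma above. First I would tensor this sequence with $\sigma$, restrict to $H = \GL_2(F)$, and apply $\Hom_H(-,\triv)$. Since $\triv_E \boxtimes \sigma$ restricts to the irreducible infinite-dimensional $\GL_2(F)$-representation $\sigma$, we have $\Hom_H(\triv_E \boxtimes \sigma,\triv) = \Hom_{\GL_2(F)}(\sigma,\triv) = 0$, so the long exact sequence produces an injection $\Hom_H(\Sigma_E \boxtimes \sigma,\triv) \hookrightarrow \Hom_H(\St_E \boxtimes \sigma,\triv)$ whose cokernel embeds in $\operatorname{Ext}^1_{\GL_2(F)}(\sigma,\triv)$; in particular $\dim \Hom_H(\Sigma_E \boxtimes \sigma,\triv) \le 1$. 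The same manipulation with $H' = D^\times$ and $\sigma'$ gives an injection $\Hom_{H'}(\Sigma_E \boxtimes \sigma',\triv) \hookrightarrow \Hom_{H'}(\St_E \boxtimes \sigma',\triv)$ with cokernel in $\operatorname{Ext}^1_{D^\times}(\sigma',\triv)$, with the one exception that when $\sigma = \St_F$ we have $\sigma' = \triv_{D^\times}$ and the quotient term $\Hom_{H'}(\triv_E \boxtimes \sigma',\triv) = \Hom_{D^\times}(\triv,\triv)$ is $1$-dimensional and must be carried along.

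For $\sigma \ne \St_F$, the lemma gives $\varepsilon(\As(\Sigma_E)\times\sigma) = \varepsilon(\sigma)\varepsilon(\sigma\times\omega_{E/F}) = \varepsilon(\As(\St_E)\times\sigma)$, so the dichotomy in the statement is exactly Prasad's dichotomy for $\St_E \boxtimes \sigma$; feeding that into the two injections above yields all the required inequalities, and it remains only to check that the $\operatorname{Ext}^1$ groups vanish so that the injections are isomorphisms. For $D^\times$: since $\sigma' \ne \triv$, an extension of $\triv$ by $\sigma'$ has trivial central character (the obstruction lies in $\Hom_{D^\times}(\sigma',\triv)=0$), hence lies in the semisimple category of smooth $D^\times$-representations with trivial central character ($D^\times/F^\times$ being compact), and so splits. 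For $\GL_2(F)$: $\sigma$ and $\triv$ correspond to distinct points of the Bernstein centre once $\sigma \ne \St_F$ — the irreducibles sharing $\triv$'s Bernstein-centre character being exactly $\triv$ and $\St_F$, the two constituents of $\Sigma_F$ — so $\operatorname{Ext}^1_{\GL_2(F)}(\sigma,\triv) = 0$. This settles the case $\sigma \ne \St_F$.

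The case $\sigma = \St_F$ is the heart of the matter. Here the lemma places us in case (ii): $\varepsilon(\As(\Sigma_E)\times\St_F)\,\omega_{E/F}(-1) = -1$, whereas $\varepsilon(\As(\St_E)\times\St_F)\,\omega_{E/F}(-1) = +1$. The $H'$-side is then immediate, since $\Hom_{H'}(\St_E \boxtimes \triv_{D^\times},\triv) = 0$ by Prasad, so the dévissage gives $\Hom_{H'}(\Sigma_E \boxtimes \triv_{D^\times},\triv) \cong \Hom_{D^\times}(\triv,\triv) = \CC$. The remaining point — that $\Hom_H(\Sigma_E \boxtimes \St_F,\triv) = 0$, i.e.\ (using $\St_F^\vee \cong \St_F$) that $\Hom_{\GL_2(F)}(\Sigma_E|_{\GL_2(F)},\St_F) = 0$ — says that the unique period on the submodule $\St_E \boxtimes \St_F$ does not extend to $\Sigma_E \boxtimes \St_F$, and I would prove it by Mackey theory. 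The group $\GL_2(F)$ has two orbits on $\mathbf{P}^1(E) = B(E)\backslash\GL_2(E)$: the closed orbit $\mathbf{P}^1(F)$ and the open orbit $\cong E^\times\backslash\GL_2(F)$, $E^\times$ the non-split torus. Realising $\Sigma_E$ as the space of locally constant densities on $\mathbf{P}^1(E)$, this gives $0 \to \operatorname{ind}_{E^\times}^{\GL_2(F)}(\triv) \to \Sigma_E|_{\GL_2(F)} \to Q' \to 0$, and computing the normal-bundle twist identifies the closed-orbit quotient $Q'$ with an irreducible principal series of $\GL_2(F)$ (concretely $I_F(|\cdot|^{3/2},|\cdot|^{-3/2})$), which in particular has no quotient isomorphic to $\St_F$. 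Hence $\Hom_{\GL_2(F)}(\Sigma_E|_{\GL_2(F)},\St_F) = \Hom_{\GL_2(F)}(\operatorname{ind}_{E^\times}^{\GL_2(F)}(\triv),\St_F)$, which after passing to contragredients (using that smooth duality exchanges $\operatorname{ind}$ and $\operatorname{Ind}$ here, both groups being unimodular) equals $\Hom_{\GL_2(F)}(\St_F,\operatorname{Ind}_{E^\times}^{\GL_2(F)}(\triv)) = \Hom_{E^\times}(\St_F|_{E^\times},\triv)$ by Frobenius reciprocity. This vanishes by the local toric results of Saito, Tunnell and Waldspurger, precisely because the Jacquet–Langlands transfer $\triv_{D^\times}$ of $\St_F$ satisfies $\Hom_{E^\times}(\triv_{D^\times}|_{E^\times},\triv) = \CC \ne 0$. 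Thus $\Hom_H(\Sigma_E \boxtimes \St_F,\triv) = 0$, completing the proof.

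I expect the Mackey analysis of $\Sigma_E|_{\GL_2(F)}$ in the last step — getting the normal-bundle normalisation right so the closed-orbit quotient $Q'$ carries no Steinberg, and then recognising the open-orbit contribution as a non-split torus period — to be the main technical obstacle; everything else is formal bookkeeping with the $\varepsilon$-factor identities of the lemma and the statement of Prasad's theorem.
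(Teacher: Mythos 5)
Your proof is correct, and it takes a genuinely different (and arguably cleaner) route than the paper's for most of the argument. The paper establishes the $H'$-side by a deformation argument: $I(s)|_{H'} \cong \operatorname{Ind}_{E^\times}^{H'}(\triv)$ is independent of $s$ by Mackey theory, so $\dim\Hom_{H'}(I(s)\boxtimes\sigma',\triv)$ is constant and can be read off from the irreducible ($s\neq 0$) case. For the $H$-side nonvanishing, the paper proceeds case-by-case in $\sigma$ (zeta-integrals for principal series, Mackey arguments for twists of Steinberg, a Bernstein-component $\operatorname{Ext}^1$-vanishing for supercuspidals), then proves an upper bound by injecting into the $\St_E$ case. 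You instead run a uniform dévissage on the sequence $0\to\St_E\to\Sigma_E\to\triv_E\to 0$ for both $H$ and $H'$, observing that the connecting terms $\Hom(\sigma,\triv)$ and $\Hom_{D^\times}(\sigma',\triv)$ vanish when $\sigma\neq\St_F$, and then killing the cokernels by the $\operatorname{Ext}^1$-vanishings (Bernstein component for $\GL_2(F)$, compactness of $D^\times/F^\times$ for the quaternion side). This upgrades the paper's injections to isomorphisms and delivers both inequalities simultaneously from Prasad's irreducible theorem; the trade-off is that you need to justify both $\operatorname{Ext}^1$-vanishings, which you do correctly (the central-character-obstruction observation on the $D^\times$ side is the right way to reduce to semisimplicity). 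The residual case $\sigma = \St_F$ is treated in essentially the same way in both: Mackey decomposition of $\Sigma_E|_{\GL_2(F)}$ along the closed/open orbits on $\mathbf{P}^1(E)$, the closed-orbit quotient being an irreducible principal series with no map to $\St_F$, and the open-orbit term being governed by the Tunnell--Saito toric period, which vanishes for $\St_F$ because the corresponding period is nonzero on $\triv_{D^\times}$. One small remark: your identification of the closed-orbit quotient as $I_F(|\cdot|^{3/2},|\cdot|^{-3/2})$ appears to be the correct normalisation (since $|a|_E = |a|_F^2$ for $a\in F^\times$ gives inducing character $|\cdot|^2\boxtimes|\cdot|^{-2}$ unnormalised, hence $|\cdot|^{3/2}\boxtimes|\cdot|^{-3/2}$ normalised), whereas the paper writes $I_F(|\cdot|_F,|\cdot|_F^{-1})$; this discrepancy is immaterial because both are irreducible principal series with supercuspidal support distinct from that of $\St_F$, so neither carries a map to $\St_F$.
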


  \begin{proof}
   We first consider the situation for $H'$. This case is relatively simple, since $H'$ is compact modulo centre, and hence the functor of $H'$-invariants is exact on the category of $H'$-representations trivial on $F^\times$. So we have
   \[ \dim \Hom_{H'}(\Sigma_E \otimes \sigma', \triv) = \dim \Hom_{H'}(\sigma', \triv) + \dim \Hom_{H'}(\St_E \otimes \sigma', \triv).\]
   Using Prasad's results for $\Hom_{H'}(\St_E \otimes \sigma', \triv)$ and the preceding lemma, we see that $\dim \Hom_{H'}(\Sigma_E \otimes \sigma', \triv)$ has dimension 1 if $\vep(\sigma) \vep(\sigma \times \omega_{E/F}) = -\omega_{E/F}(-1)$ and is zero otherwise, as required.
   \medskip

   For the group $H$, the situation is a little more complicated: since $\sigma$ is generic, we have $\Hom_H(\sigma, \triv)$ is zero, and hence there is an exact sequence
   \[ 0 \to \Hom_H(\Sigma_E \otimes \sigma, \triv) \to \Hom_H(\St_E \otimes \sigma, \triv) \to \operatorname{Ext}^1_{\PGL_2(F)}(\sigma, \triv).\]

   \emph{Claim}: The group $\operatorname{Ext}^1_{\PGL_2(F)}(\sigma, \triv)$ is 1-dimensional if $\sigma = \St_F$, and zero otherwise.\medskip

   \emph{Proof of Claim}: If $\sigma$ is supercuspidal the result is immediate, since $\sigma$ is projective in the category of $\PGL_2(F)$-representations. The remaining cases can be handled directly using Frobenius reciprocity, or alternatively, one can appeal to Schneider--Stuhler duality (as reformulated in \cite[Theorem 2]{noriprasad20}) to show that the $\operatorname{Ext}$ group is dual to $\Hom_H(D(\triv), \sigma)$ where $D$ is the Aubert--Zelevinsky involution, which sends $\triv$ to $\St_F$.\medskip

   This gives the desired formula for $\dim \Hom_H(\Sigma_E \otimes \sigma, \triv)$ in all cases except when $\sigma = \St_F$, in which case we must show that the non-trivial $H$-invariant period of $\St_E \otimes \St_F$ does not lift to $\Sigma_E \otimes \St_F$. This can be done directly: we can compute $\Sigma_E |_{\GL_2(F)}$ via Mackey theory, using the two orbits of $H$ on $\mathbf{P}^1(E)$ to obtain the exact sequence
    \[ 0 \to \operatorname{cInd}_{E^\times}^{\GL_2(F)}(\triv) \to \Sigma_E \to I_F(|\cdot|_F, |\cdot|_F^{-1}) \to 0.\]
   The latter representation is irreducible and has no homomorphisms to $\St_F$; and we saw in the proof of Theorem \ref{thm:asaiA} that
   \[ \Hom_H(\operatorname{cInd}_{E^\times}^{H}(\triv) \otimes \St_F, \triv) = \Hom_{E^\times}(\St_F, \triv) = 0.\]
   This shows that $\Hom_H(\Sigma_E \boxtimes \St_F, \triv) = 0$, completing the proof.
  \end{proof}

  \begin{remark}
   We are grateful to the anonymous referee for pointing out the significance of the vanishing of $\operatorname{Ext}^1_{\PGL_2(F)}(\sigma, \triv)$; the original version of this paper used a different and rather more complicated argument.
  \end{remark}

  \addtocounter{theorem}{-1}
  \addtocounter{subtheorem}{2}

  \begin{subtheorem}
   Let $\eta$ be a quadratic character of $F^\times$ (possibly trivial). Then we have $\dim \Hom_H(\Sigma_E \boxtimes \Sigma_F, \eta) = 1$.
  \end{subtheorem}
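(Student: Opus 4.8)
The plan is to prove the two bounds $\dim \Hom_H(\Sigma_E \boxtimes \Sigma_F, \eta) \ge 1$ and $\le 1$ separately, reusing the tools already deployed above; throughout, $\eta$ is read as the character $\eta \circ \det$ of $H$, and as $\eta \circ \operatorname{Nm}_{E/F}$ when restricted to $E^\times$. For the lower bound I would rerun the Asai zeta-integral construction from the proof of \cref{thm:asaiA}, now inserting the twist by $\eta$: since $\Sigma_E$ is of Whittaker type it has a Whittaker model $\cW(\Sigma_E)$ (for an additive character of $E$ trivial on $F$), and one sets
\[ Z(W, \Phi, s) = \int_{N_H \backslash H} W(h)\, \Phi\big((0,1)h\big)\, \eta(\det h)\, |\det h|^{s}\ \mathrm{d}h, \qquad W \in \cW(\Sigma_E),\ \Phi \in \cS(F^2). \]
Exactly as in \emph{loc.\ cit.}, this converges for $\Re(s) \gg 0$, continues meromorphically, and its values span the fractional ideal of $\CC[q^s, q^{-s}]$ generated by an $L$-factor independent of $W$ and $\Phi$ (the $\eta$-twisted Asai $L$-factor of $\Sigma_E$); dividing by that factor and letting $s \to 0$ gives a nonzero bilinear form on $\cW(\Sigma_E) \otimes \cS(F^2)$ on which $H$ acts through $\eta^{-1} = \eta$ (the factor $|\det h|^{-s}$ being trivial at $s = 0$) and which is invariant under the central $F^\times \subset H$ because $\omega_{\Sigma_E}|_{F^\times} = 1$ and $\eta^2 = 1$. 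As the maximal quotient of $\cS(F^2)$ on which $F^\times$ acts trivially is $\Sigma_F$ (as recalled in the proof of \cref{thm:asaiA}), this descends to a nonzero element of $\Hom_H(\Sigma_E \boxtimes \Sigma_F, \eta)$.

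For the upper bound the point is to restrict $\Sigma_E$ to $H$ along the Mackey sequence written out in the proof of \cref{thm:asaiB}, namely $0 \to \operatorname{cInd}_{E^\times}^{H}(\triv) \to \Sigma_E|_H \to \tau \to 0$ with $\tau = I_F(|\cdot|_F, |\cdot|_F^{-1})$ irreducible, rather than along the filtration of $\Sigma_F$. Tensoring this sequence over $\CC$ with $\Sigma_F$ (which identifies the middle term with $(\Sigma_E \boxtimes \Sigma_F)|_H$) and applying $\Hom_H(-,\eta)$ gives
\[ 0 \to \Hom_H(\tau \otimes \Sigma_F, \eta) \to \Hom_H(\Sigma_E \boxtimes \Sigma_F, \eta) \to \Hom_H\big(\operatorname{cInd}_{E^\times}^{H}(\triv) \otimes \Sigma_F,\ \eta\big). \]
The first term equals $\Hom_H(\Sigma_F, \tilde\tau \otimes \eta)$, and this vanishes: $\tilde\tau \otimes \eta$ is an irreducible principal series, hence isomorphic to neither $\St_F$ nor $\triv$, the only composition factors of $\Sigma_F$, so it cannot be a quotient of $\Sigma_F$. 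For the third term, the projection formula gives $\operatorname{cInd}_{E^\times}^{H}(\triv) \otimes \Sigma_F \cong \operatorname{cInd}_{E^\times}^{H}(\Sigma_F|_{E^\times})$, so by Frobenius reciprocity it is $\Hom_{E^\times}(\Sigma_F|_{E^\times}, \eta \circ \operatorname{Nm}_{E/F})$; restricting $0 \to \St_F \to \Sigma_F \to \triv \to 0$ to $E^\times$ bounds its dimension by $\dim \Hom_{E^\times}(\triv, \eta \circ \operatorname{Nm}_{E/F}) + \dim \Hom_{E^\times}(\St_F, \eta \circ \operatorname{Nm}_{E/F})$. Since $\eta \circ \operatorname{Nm}_{E/F}$ is trivial on $F^\times$, the Tunnell--Saito theorem gives $\dim \Hom_{E^\times}(\St_F, \eta \circ \operatorname{Nm}_{E/F}) \le 1$, this dimension being $0$ precisely when $\eta \circ \operatorname{Nm}_{E/F} = \triv$ (the Jacquet--Langlands partner $\triv_{D^\times}$ of $\St_F$ then carrying the $E^\times$-invariant functional); and when $\eta \circ \operatorname{Nm}_{E/F} \ne \triv$ the first summand vanishes instead. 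Either way the sum is $\le 1$, whence $\dim \Hom_H(\Sigma_E \boxtimes \Sigma_F, \eta) \le 1$. Combining the two bounds gives the claimed equality.

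The delicate step is the lower bound: one must confirm that the zeta-integral formalism — meromorphic continuation, and the identification of the greatest common divisor with an $L$-factor — really carries over to the reducible representation $\Sigma_E$ and is unaffected by the extra twist by $\eta$, and that the normalised limiting form survives the passage to the quotient $\cS(F^2) \onto \Sigma_F$; all of this follows the pattern of \cref{thm:asaiA}, so the work lies in verifying that the twist introduces nothing genuinely new. The upper bound, by contrast, is routine bookkeeping with the two short exact sequences together with the standard vanishing of $\Hom_{E^\times}(\St_F, \triv)$.
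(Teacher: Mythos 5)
Your lower-bound construction coincides with the paper's: both run the Asai zeta integral (here with the extra $\eta(\det h)$ inserted), use that $\Sigma_E$ has a Whittaker model, and check that the normalised limit at $s=0$ factors through $\cS(F^2) \twoheadrightarrow \Sigma_F$. Your upper bound, though, is a genuinely different argument. The paper splits on whether $\eta$ is trivial: for $\eta \neq \triv$ it filters along $0 \to \St_E \to \Sigma_E \to \triv \to 0$ and appeals (after a twisting observation) to \cref{thm:asaiA}; for $\eta = \triv$ it filters along $0 \to \St_F \to \Sigma_F \to \triv \to 0$, quotes the vanishing $\Hom_H(\Sigma_E \boxtimes \St_F, \triv) = 0$ established at the end of the proof of \cref{thm:asaiB}, and reduces to the Mackey computation of $\Hom_H(\Sigma_E, \triv)$. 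You instead treat all quadratic $\eta$ uniformly, running everything through the Mackey sequence $0 \to \operatorname{cInd}_{E^\times}^{H}(\triv) \to \Sigma_E|_H \to I_F(|\cdot|, |\cdot|^{-1}) \to 0$ tensored with $\Sigma_F$, killing the closed-orbit term because $I_F(|\cdot|,|\cdot|^{-1})^\vee \otimes \eta$ is irreducible principal series, and bounding the open-orbit term by $\dim\Hom_{E^\times}(\Sigma_F|_{E^\times}, \eta\circ\operatorname{Nm}_{E/F}) \le 1$ via the Tunnell--Saito dichotomy for $\St_F$. Both arguments are correct; yours gains uniformity in $\eta$ and avoids invoking the $\sigma = \St_F$ case of \cref{thm:asaiB}, at the cost of the external Tunnell--Saito input, which the paper does not need here. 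One small caution: the step $\Hom_H(\operatorname{cInd}_{E^\times}^H W, \eta) \cong \Hom_{E^\times}(W, \eta|_{E^\times})$ is Frobenius reciprocity for compact induction from a closed but \emph{non-open} subgroup, and for a non-admissible $W$ such as $\Sigma_F|_{E^\times}$ the equality needs the duality $(\operatorname{cInd}_{E^\times}^H W)^\vee \cong \operatorname{Ind}_{E^\times}^H(W^\vee)$ (valid since $H$ and $E^\times$ are unimodular) together with injectivity of passing to contragredients; for the inequality you actually use, only that injectivity is required, so your estimate stands, but it is not the naive open-subgroup adjunction.
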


  \begin{proof}[Proof]
   The computation of the epsilon-factor is immediate; and by a zeta-integral argument as before, we can show that $\Hom_H(\Sigma_E \boxtimes \Sigma_F, \eta) \ne 0$ (since the representation $\Sigma_E$, despite being reducible, has a well-defined Whittaker model). So it suffices to show that the hom-space has dimension $\le 1$.

   If $\eta$ is not the trivial character, then $\Hom_H(\triv \boxtimes \Sigma_F, \eta) = 0$, so the desired Hom-space injects into $\Hom_H(\St_E \boxtimes \Sigma_F, \eta)$, which is 1-dimensional by Theorem \ref{thm:asaiA}.

   If $\eta$ is trivial, then we have seen above that $\Hom_H(\Sigma_E \boxtimes \St_F, \triv)$ is zero. So $\Hom_H(\Sigma_E \boxtimes \Sigma_F, \triv) = \Hom_H(\Sigma_E, \triv)$. From the Mackey decomposition of $\Sigma_E |_{\GL_2(F)}$ above, one sees easily that this space is 1-dimensional.
  \end{proof}

\section{Cubic fields}

 We briefly discuss the case where $A$ is a cubic extension of $F$.

 \begin{theorem}
  Let $\pi$ be a Whittaker-type representation of $\GL_2(E)$. Then the space $\Hom_H(\pi, \triv)$ has dimension 1 if $\vep(\As(\pi))\omega_{A}(-1) = 1$ and is zero otherwise.
 \end{theorem}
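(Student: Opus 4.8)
The plan is to reduce at once to the reducible representations, since a Whittaker-type $\pi$ of $\GL_2(A)$ with $A$ a cubic field is either irreducible and generic --- in which case the assertion is Prasad's theorem \cite{prasad92}, together with \cite{prasadSP08} for supercuspidals of cubic fields --- or a twist $\Sigma_A\otimes\chi$ of the reducible principal series by a character $\chi$ of $A^\times$. The hypothesis that the central character $\chi^2$ be trivial on $F^\times$ says exactly that $\eta:=\chi|_{F^\times}$ is a (possibly trivial) quadratic character. As $A$ is a cubic field it contains no quadratic subfield, so $F^\times\cap A^{\times 2}=F^{\times 2}$ and $\eta$ extends to a quadratic character $\zeta$ of $A^\times$; twisting by $\chi^{-1}\zeta$, which is trivial on $F^\times$ and hence changes neither $\Hom_H(-,\triv)$ nor $\varepsilon(\As(-))$, we may assume $\chi=\zeta$ is itself quadratic. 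Decomposing the multiplicative induction of the generic parameter $\chi|\cdot|_A^{1/2}\oplus\chi|\cdot|_A^{-1/2}$ into characters of $W_F$ and two-dimensional representations induced from $W_A$, and combining multiplicativity of $\varepsilon$-factors with the reflection formula $\varepsilon(\rho)\varepsilon(\rho^\vee)=(\det\rho)(-1)$ --- the relation $\chi^2=1$ ensuring that the summands pair up --- one finds $\varepsilon(\As(\Sigma_A\otimes\chi))=\omega_A(-1)$. So the $\varepsilon$-factor condition holds automatically, and the theorem reduces to showing $\dim\Hom_H(\Sigma_A\otimes\chi,\triv)=1$.

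For the upper bound I would use Mackey theory. Because the only subfields of $A$ are $F$ and $A$, the group $H=\GL_2(F)$ has precisely two orbits on $\mathbf{P}^1(A)=\GL_2(A)/B(A)$: the closed orbit $\mathbf{P}^1(F)$, with stabiliser $B(F)$, and an open orbit isomorphic to $\GL_2(F)/F^\times$ --- the stabiliser of a line $A\cdot(a,1)$ with $a$ a generator of $A$ being just the scalars, since such an $a$ satisfies no relation of degree $\le 2$ over $F$. The geometric lemma then gives an exact sequence of $H$-representations
\[ 0 \to \operatorname{cInd}_{F^\times}^{\GL_2(F)}(\triv) \to \bigl(\Sigma_A\otimes\chi\bigr)\big|_{\GL_2(F)} \to I_F\bigl(\eta|\cdot|_F^{5/2},\,\eta|\cdot|_F^{-5/2}\bigr) \to 0, \]
in which the open-orbit subrepresentation carries the trivial character of $F^\times$ because the central character $\chi^2$ of $\Sigma_A\otimes\chi$ is trivial on $F^\times$, and the closed-orbit quotient is read off from the inducing datum and the ratio $\delta_{B(A)}/\delta_{B(F)}$ of modulus characters restricted to $B(F)$ (the precise exponent is immaterial: what matters is that this is an irreducible principal series --- its inducing characters differ by $|\cdot|_F^{5}\ne|\cdot|_F^{\pm1}$ --- and is not a character twist of $\Sigma_F$). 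Hence the closed-orbit quotient admits no nonzero $H$-invariant functional, so $\Hom_H(\Sigma_A\otimes\chi,\triv)$ injects into $\Hom_{\GL_2(F)}\bigl(\operatorname{cInd}_{F^\times}^{\GL_2(F)}(\triv),\triv\bigr)$, which is one-dimensional (spanned by integration over $F^\times\backslash\GL_2(F)$, this quotient being unimodular).

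For the lower bound, equivalently for non-vanishing, I would argue by deformation, in the spirit of the Remark after the Main Theorem. Put $\pi(s)=I_A(\chi|\cdot|_A^{s+1/2},\chi|\cdot|_A^{-s-1/2})$, so $\pi(0)=\Sigma_A\otimes\chi$ and the $\pi(s)$ are irreducible for all but finitely many $s$; each $\pi(s)$ has central character $\chi^2$ and, being self-dual (as $\chi$ is quadratic) with $\det\As(\pi(s))=1$, satisfies $\varepsilon(\As\pi(s))\,\omega_A(-1)=1$. By Prasad's theorem $\dim\Hom_H(\pi(s),\triv)=1$ for generic $s$, and since $\dim\Hom_H(\pi(s),\triv)$ is upper semicontinuous in the algebraic family $\{\pi(s)\}$, we get $\dim\Hom_H(\pi(0),\triv)\ge 1$; combined with the upper bound this yields $\dim\Hom_H(\Sigma_A\otimes\chi,\triv)=1$, as required. (In two sub-cases one can see this more directly: when $\eta$ is ramified the representations $I_F(\eta|\cdot|_F^{5/2},\eta|\cdot|_F^{-5/2})$ and $\triv$ lie in distinct Bernstein components, so the relevant extension class in the Mackey sequence automatically vanishes; and when $\eta$ is trivial the one-dimensional quotient $\chi\circ\det$ of $\Sigma_A\otimes\chi$ already provides an $H$-invariant period.) The main obstacle is precisely this non-vanishing in the remaining case, $\eta$ the unramified quadratic character: one needs either the semicontinuity statement above, which deserves to be stated with some care, or a treatment of the Asai zeta integral --- which a priori only produces an element of $\Hom_H\bigl(\Sigma_A\otimes\chi\otimes\cS(F^2),\triv\bigr)$ and must be shown to descend to $\Sigma_A\otimes\chi$ itself. (The $\varepsilon$-factor computation in the first step, while elementary, also requires attention when $A/F$ is not Galois, as one must then unwind the multiplicative induction through the Galois closure.)
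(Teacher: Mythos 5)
Your route is genuinely different from the paper's. The paper works with the $\GL_2(A)$-composition series $0 \to \St_A \otimes\chi \to \Sigma_A\otimes\chi \to \chi\circ\det \to 0$: citing Prasad's explicit $\varepsilon$-factor tables (\S 8 of \cite{prasad92}), it shows exactly one of $\Hom_H(\St_A\otimes\chi,\triv)$, $\Hom_H(\chi\circ\det,\triv)$ is nonzero, giving the upper bound; for the lower bound, the obstruction to extending the period from $\St_A\otimes\chi$ lies in $\operatorname{Ext}^1_{\GL_2(F)}(\triv,\lambda\circ\det)$, which vanishes by an elementary argument (a smooth extension of two one-dimensional representations of $\GL_2(F)$ factors through $\det$, and distinct smooth characters of $F^\times$ admit no nonsplit smooth extension), uniformly in $\lambda$. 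Your Mackey-theoretic upper bound is in fact a nice alternative: the two-orbit description of $H\backslash \mathbf{P}^1(A)$ is correct for $A$ a cubic field (Hilbert 90 applied to the $\mathbf{G}_m$-stabiliser), the exponent $5/2$ is right, and $\Hom_H\bigl(\operatorname{cInd}_{F^\times}^H\triv,\triv\bigr)\cong\Hom_{F^\times}(\triv,\triv)$ gives $\le 1$ without invoking Prasad or any $\varepsilon$-factor computation at all.

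The genuine gap is in your lower bound, precisely where you flag it: when $\eta$ is the nontrivial unramified quadratic character. Your Mackey sequence places the obstruction in $\operatorname{Ext}^1_H\bigl(I_F(\eta|\cdot|^{5/2},\eta|\cdot|^{-5/2}),\triv\bigr)$; your Bernstein-component argument disposes of ramified $\eta$, and the one-dimensional quotient handles $\eta=1$, but the unramified quadratic case is left to an appeal to either "upper semicontinuity of $\dim\Hom_H(\pi(s),\triv)$" or "the Asai zeta integral", and neither is available as stated. The paper's own Remark after the Main Theorem is explicit that local constancy of $\dim\Hom_H(\Pi(s),\triv)$ along such deformations is not a known general principle here but is essentially what the theorem asserts, so it cannot be cited; and for a cubic \emph{field} the relevant integral is the cubic (Garrett/Piatetski-Shapiro--Rallis) construction rather than the Asai one, and the descent from a functional on $\Sigma_A\otimes\chi\otimes\mathcal{S}(F^2)$ to one on $\Sigma_A\otimes\chi$ is exactly the issue you would need to resolve. (The $\operatorname{Ext}^1$ you need does vanish — for instance via Bernstein's second adjunction, since the opposite Jacquet module of $\triv$ and the inducing character $\eta|\cdot|^{5/2}\boxtimes\eta|\cdot|^{-5/2}$ are distinct characters of the torus — but you have not supplied an argument.) The structural advantage of the paper's approach is that by filtering as a $\GL_2(A)$-representation rather than via Mackey, the obstruction always lands in an $\operatorname{Ext}^1$ between \emph{one-dimensional} $H$-representations, which is trivial to kill; yours lands in an $\operatorname{Ext}^1$ against an infinite-dimensional principal series, which requires more work. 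Separately, your $\varepsilon$-factor computation is only a sketch and, as you note, needs to be unwound through the Galois closure when $A/F$ is not Galois; the paper simply cites Prasad's computation.
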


 \begin{proof}The case of irreducible generic $\pi$ is proved in \cite{prasad92} assuming $\pi$ non-supercuspidal, and the supercuspidal case is filled in by \cite{prasadSP08}. In this case, the only example of a reducible Whittaker-type representation of $G$ is $\Sigma_E \otimes \eta$, where $\eta$ is a character of $E^\times$; and the central-character condition implies that $\lambda = \eta|_{F^\times}$ must be trivial or quadratic.

 The $\vep$-factors $\vep(\As(\St_E) \times \lambda)$ are computed in \cite[\S 8]{prasad92}. We find that $\vep(\As(\Sigma_E) \times \lambda)\omega_{E/F}(-1)$ is always $+1$. On the other hand, $\vep(\As(\St_E) \times \lambda) \omega_{E/F}(-1)$ is $+1$ if $\lambda$ is nontrivial quadratic, and $-1$ if $\lambda = 1$. So it follows that exactly one of $\Hom_H(\triv, \lambda)$ and $\Hom_H(\St_E, \lambda)$ is non-zero, implying that $\dim \Hom_H(\Sigma_E \otimes \eta, \triv) \le 1$.

 To complete the proof, we must show that when $\lambda \ne 1$, the $H$-invariant homomorphism $\Hom_H(\St_E, \lambda)$ extends to $\Sigma_E$. However, this is clear since the obstruction lies in $\operatorname{Ext}^1_H(\triv, \lambda)$, which is zero.
 \end{proof}

 This completes the proof of the Main Theorem.

\section{An application to Euler systems}

 We now give a global application, a strengthening of some results from \cite{leiloefflerzerbes18} and \cite{grossi19} on Euler systems for quadratic Hilbert modular forms. Let $K / \QQ$ be a real quadratic field and write $G = \operatorname{Res}_{K/\QQ}(\GL_2)$ and $H = \GL_{2/\QQ} \subset G$; set $G_f = G(\AA_{f}) = \GL_2(\AA_{K, f})$ and $H_f$ similarly.

 \subsection{Adelic representations}

 Let $\chi$ be a finite-order character of $\AA_f^\times$ and define a representation of $H_f$ by $\cI(\chi) = \sideset{}{'}\bigotimes_\ell \cI_\ell(\chi_\ell)$, where $\cI_{\ell}(\chi_{\ell})$ denotes the representation of $H_\ell$ given by normalised induction of the character $\chi_{\ell} |\cdot|^{1/2} \boxtimes |\cdot|^{-1/2}$ of the Borel subgroup. For $\chi = 1$, we let $\cI^0(1)$ denote the codimension 1 subrepresentation of $\cI(1)$. Exactly as in \cite[\S 2]{harrisscholl01}, the local results above imply the following branching law for $G_f$-representations:

 \begin{proposition}
 \label{prop:semilocal}
  Let $\pi$ be an irreducible admissible representation of $G_f$, all of whose local factors are generic, with $\omega_{\pi} |_{\AA_f^\times} = \chi^{-1}$.
  \begin{itemize}
   \item We have $\dim \Hom_{H_f}(\pi \otimes \cI(\chi), \triv) = 1$.
   \item If $\chi = 1$ and there exists some $\ell$ such that $\Hom_{H_\ell}(\pi_\ell, \triv) = 0$, then $\dim \Hom_{H_f}(\pi \otimes \cI^0(1), \triv) = 1$ and the natural restriction map $\Hom_{H_f}(\pi \otimes \cI(1), \triv) \to \Hom_{H_f}(\pi \otimes \cI^0(1), \triv)$ is a bijection.
   \item If $\chi = 1$ and $\Hom_{H_\ell}(\pi_\ell, \triv) \ne 0$ for all $\ell$, then $\dim \Hom_{H_f}(\pi \otimes \cI^0(1), \triv) = \infty$.
  \end{itemize}
 \end{proposition}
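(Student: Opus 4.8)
The plan is to follow \cite[\S2]{harrisscholl01} step by step, feeding in the Main Theorem (in its split-algebra and quadratic-field forms) in place of its local inputs. Factor $\pi = \bigotimes'_\ell \pi_\ell$ and $\cI(\chi) = \bigotimes'_\ell \cI_\ell(\chi_\ell)$, so that $\pi \otimes \cI(\chi) = \bigotimes'_\ell \Pi_\ell$ with $\Pi_\ell \coloneqq \pi_\ell \boxtimes \cI_\ell(\chi_\ell)$; this is a Whittaker-type representation of $\GL_2(A_\ell)$ for the cubic \'etale $\QQ_\ell$-algebra $A_\ell \coloneqq (K \otimes_\QQ \QQ_\ell) \times \QQ_\ell$, and its central character is trivial on $\QQ_\ell^\times$ because $\omega_{\pi_\ell}|_{\QQ_\ell^\times} = \chi_\ell^{-1}$ while $\omega_{\cI_\ell(\chi_\ell)} = \chi_\ell$. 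So the Main Theorem applies to each $\Pi_\ell$; the only places where the \emph{new} content of this paper is needed rather than Prasad's results are those with $\chi_\ell = 1$, where $\cI_\ell(1) = \Sigma_{\QQ_\ell}$ is reducible.

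For the first bullet I would show $\dim \Hom_{H_\ell}(\Pi_\ell, \triv) = 1$ at \emph{every} place. Under the generic Langlands correspondence $\cI_\ell(\chi_\ell)$ has parameter $\chi_\ell|\cdot|^{1/2} \oplus |\cdot|^{-1/2}$, so $\As(\Pi_\ell) = \bigl(\As(\rho_{\pi_\ell}) \otimes \chi_\ell|\cdot|^{1/2}\bigr) \oplus \bigl(\As(\rho_{\pi_\ell}) \otimes |\cdot|^{-1/2}\bigr)$. Using the multiplicative-induction identities $\As(\rho^\vee) = \As(\rho)^\vee$ and $\As(\rho \otimes \lambda) = \As(\rho) \otimes (\lambda|_{\QQ_\ell^\times})$ for a character $\lambda$ of $K_\ell^\times$, together with $\rho_{\pi_\ell}^\vee = \rho_{\pi_\ell} \otimes \omega_{\pi_\ell}^{-1}$ and $\omega_{\pi_\ell}|_{\QQ_\ell^\times} = \chi_\ell^{-1}$, one checks $\As(\Pi_\ell) \cong \tau_\ell \oplus \tau_\ell^\vee$ with $\tau_\ell = \As(\rho_{\pi_\ell}) \otimes \chi_\ell|\cdot|^{1/2}$, so that $\varepsilon(\As \Pi_\ell) = \varepsilon(\tau_\ell)\varepsilon(\tau_\ell^\vee) = (\det \tau_\ell)(-1)$. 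Since $(\chi_\ell|\cdot|^{1/2})^4(-1) = 1$ and $\det\As(\rho_{\pi_\ell}) = \omega_{A_\ell} \cdot \bigl((\det\rho_{\pi_\ell})|_{\QQ_\ell^\times}\bigr)^2$, this equals $\omega_{A_\ell}(-1)$, hence $\varepsilon(\As\Pi_\ell)\,\omega_{A_\ell}(-1) = 1$ and the Main Theorem gives $\dim\Hom_{H_\ell}(\Pi_\ell,\triv) = 1$. For almost all $\ell$ everything is unramified and the (essentially unique) invariant functional is nonzero on the product of spherical vectors — the unramified Asai zeta-integral computation, as in the proof of \cref{thm:asaiA} — so the usual factorisation of invariant functionals over a restricted tensor product gives $\Hom_{H_f}(\pi \otimes \cI(\chi), \triv) = \bigotimes'_\ell \Hom_{H_\ell}(\Pi_\ell, \triv)$, which is one-dimensional.

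For the second and third bullets ($\chi = 1$) I would apply $\Hom_{H_f}(\pi \otimes -, \triv)$ to $0 \to \cI^0(1) \to \cI(1) \to \triv \to 0$, obtaining a long exact sequence in which $\Hom_{H_f}(\pi, \triv) = \bigotimes'_\ell \Hom_{H_\ell}(\pi_\ell, \triv)$, each factor of dimension $\le 1$ by Flicker's multiplicity-one theorem for $\GL_2(K_\ell)$-representations distinguished by $\GL_2(\QQ_\ell)$ (and Schur's lemma at the split places). In the second case $\Hom_{H_f}(\pi, \triv) = 0$, so the restriction map $\Hom_{H_f}(\pi \otimes \cI(1), \triv) \to \Hom_{H_f}(\pi \otimes \cI^0(1), \triv)$ is injective from a one-dimensional source, and one matches this with the upper bound $\dim\Hom_{H_f}(\pi \otimes \cI^0(1), \triv) \le 1$, which comes from the local dichotomy at $\ell_0$ (there $\Hom_{H_{\ell_0}}(\pi_{\ell_0},\triv) = 0$ forces $\dim\Hom_{H_{\ell_0}}(\pi_{\ell_0} \otimes \St_{\QQ_{\ell_0}},\triv) = 1$) combined with the exact sequence $0 \to \Sigma_{\QQ_{\ell_0}} \otimes \cI^0(1)^{(\ell_0)} \to \cI^0(1) \to \St_{\QQ_{\ell_0}} \to 0$ of $H_f$-representations. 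In the third case $\Hom_{H_f}(\pi,\triv)$ is one-dimensional, the restriction map above is zero, and $\Hom_{H_f}(\pi\otimes\cI^0(1),\triv)$ is identified with the kernel of $\operatorname{Ext}^1_{H_f}(\pi,\triv) \to \operatorname{Ext}^1_{H_f}(\pi\otimes\cI(1),\triv)$, which one shows is infinite-dimensional — as in \emph{op.\ cit.}, this reflects the failure of uniqueness of the period in this degenerate case.

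I expect the main obstacle to lie entirely in the last two bullets: the first is a short, purely local $\varepsilon$-factor computation (insensitive to the fine structure of $\pi_\ell$) followed by a standard factorisation, but pinning down the dimensions for $\pi\otimes\cI^0(1)$ is delicate, the difficulty being that $\cI^0(1)$ is only codimension one in $\cI(1)$ rather than a clean restricted tensor product — one cannot pass to the Steinberg sub at all places simultaneously. Both the matching upper bound in the second bullet and the infinite-dimensionality in the third reduce, as in \cite[\S2]{harrisscholl01}, to the local dichotomy $\dim\Hom_{H_\ell}(\pi_\ell\otimes\St_{\QQ_\ell},\triv) + \dim\Hom_{H_\ell}(\pi_\ell,\triv) = 1$, which follows by combining the two identities of the Main Theorem with the dichotomy between distinction of $\pi_\ell$ by $\GL_2(\QQ_\ell)$ and by $D^\times(\QQ_\ell)$; I would ultimately present these two bullets as a reference to \emph{op.\ cit.}\ with the local inputs supplied by the results above.
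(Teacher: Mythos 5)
Your overall strategy matches the paper's (which does not give a proof but simply cites \cite[\S2]{harrisscholl01}): reduce to local branching problems, verify that $\varepsilon(\As\Pi_\ell)\,\omega_{A_\ell}(-1)=1$ at every place so that the Main Theorem gives $\dim\Hom_{H_\ell}(\Pi_\ell,\triv)=1$, and factorise $\Hom_{H_f}$ over the restricted tensor product. The $\varepsilon$-factor computation and the factorisation argument for the first bullet are correct as you have written them.

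There is, however, a genuine slip in your upper bound for the second bullet. You use the short exact sequence
\[ 0 \to \Sigma_{\QQ_{\ell_0}} \otimes \cI^0(1)^{(\ell_0)} \to \cI^0(1) \to \St_{\QQ_{\ell_0}} \otimes \triv^{(\ell_0)} \to 0. \]
This sequence is correct, but it is the wrong one for the purpose: applying the contravariant functor $\Hom_{H_f}(\pi\otimes - ,\triv)$ produces an \emph{injection} $\Hom_{H_f}(\pi\otimes\St_{\ell_0}\otimes\triv^{(\ell_0)},\triv) \hookrightarrow \Hom_{H_f}(\pi\otimes\cI^0(1),\triv)$, which is a lower bound, not an upper bound; and the term coming from the subrepresentation is $\Hom_{H_{\ell_0}}(\pi_{\ell_0}\otimes\Sigma_{\ell_0},\triv)\otimes\Hom_{H_f^{(\ell_0)}}(\pi^{(\ell_0)}\otimes\cI^0(1)^{(\ell_0)},\triv)$, where the first factor is one-dimensional and the second is exactly the quantity you are trying to bound (one place removed). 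The recursion does not close, and in fact if $\ell_0$ is the \emph{only} place where $\Hom_{H_\ell}(\pi_\ell,\triv)=0$ the second factor is infinite-dimensional by your own third bullet, so the inequality is vacuous. What you need is the \emph{other} filtration arising from the identity $\cI^0(1)=\St_{\ell_0}\otimes\cI(1)^{(\ell_0)}+\Sigma_{\ell_0}\otimes\cI^0(1)^{(\ell_0)}$ with intersection $\St_{\ell_0}\otimes\cI^0(1)^{(\ell_0)}$, namely
\[ 0 \to \St_{\QQ_{\ell_0}} \otimes \cI(1)^{(\ell_0)} \to \cI^0(1) \to \triv_{\ell_0}\otimes \cI^0(1)^{(\ell_0)} \to 0. \]
Now the Hom from the quotient factors through $\Hom_{H_{\ell_0}}(\pi_{\ell_0},\triv)=0$, giving an injection $\Hom_{H_f}(\pi\otimes\cI^0(1),\triv)\hookrightarrow\Hom_{H_f}(\pi\otimes\St_{\QQ_{\ell_0}}\otimes\cI(1)^{(\ell_0)},\triv)$; the target is a genuine restricted tensor product (with $\St$ at $\ell_0$ and $\Sigma$ elsewhere), so it factorises into local Hom-spaces, each one-dimensional by the Main Theorem combined with the dichotomy at $\ell_0$. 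In short: you must choose the filtration that pushes the non-factorisable piece $\cI^0(1)^{(\ell_0)}$ into a quotient that is killed by the vanishing at $\ell_0$, not into the subrepresentation. The third bullet, which you leave mostly to \emph{op.\ cit.}, is anyway not used in the paper's subsequent application (Proposition \ref{prop:giada} ensures one is in the second case), so I will not press on the details there.
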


 \subsection{Hilbert modular forms}

  Suppose now that $\pi$ is (the finite part of) a cuspidal automorphic representation, arising from a Hilbert modular cusp form of parallel weight $k+2 \ge 2$, normalised so that $\omega_{\pi}$ has finite order.

  \begin{proposition}\label{prop:giada}
   Suppose $\pi$ is not a twist of a base-change from $\GL_{2/\QQ}$. Then, for any Dirichlet character $\tau$, there exist infinitely many primes $\ell$ such that $\Hom_{H_\ell}\left(\pi_\ell \otimes \tau_\ell, \triv\right) = 0$.
  \end{proposition}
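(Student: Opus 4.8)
The plan is to translate the local non-vanishing statement $\Hom_{H_\ell}(\pi_\ell \otimes \tau_\ell, \triv) \ne 0$ into a constraint on the Asai $L$-function, and then use the analytic properties of that $L$-function to derive a contradiction with the hypothesis that $\pi$ is not a twist of a base-change from $\GL_{2/\QQ}$. First I would recall that for $\pi_\ell$ an unramified generic representation of $\GL_2(K_\ell)$ and $\tau_\ell$ an unramified character, the existence of a nonzero $\GL_2(\QQ_\ell)$-invariant functional on $\pi_\ell \otimes \tau_\ell$ forces the local Asai $L$-factor $L(\As(\pi_\ell) \otimes \tau_\ell, s)$ to have a pole at $s = 0$ — this is the standard dictionary between $\GL_2(F)$-distinction and poles of Asai $L$-functions (Flicker, Kable), and it is exactly the mechanism that makes the zeta-integral argument of \cref{thm:asaiA} produce a nonzero period. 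So if the conclusion fails, there is a cofinite set $S$ of primes at which $\pi_\ell$ is unramified and $L(\As(\pi_\ell) \otimes \tau_\ell, s)$ has a pole at $s=0$.

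Next I would pass to the global Asai $L$-function $L(\As(\pi) \otimes \tau, s)$, which is (up to finitely many Euler factors and a shift) the Rankin--Selberg $L$-function $L(\pi \times (\pi^\sigma \otimes \tau^{-1}), s)$ attached to $\pi$ and the conjugate representation $\pi^\sigma$ twisted by a suitable character, where $\sigma$ is the nontrivial element of $\Gal(K/\QQ)$; alternatively one uses the factorisation of $\As(\pi)$ restricted to $\QQ$ in terms of $\pi$ and its twisted base-change. A pole at $s=0$ of every local factor in a cofinite set of primes would, by an application of Landau's theorem on Dirichlet series with non-negative coefficients (or simply by comparing the order of the pole of the completed $L$-function), force the \emph{global} completed Asai $L$-function to have a pole at $s=0$; and by Flicker's theory this pole occurs precisely when $\pi$ is distinguished by $\GL_2(\QQ)$ in the automorphic sense, equivalently when $\pi^\sigma \cong \pi^\vee \otimes (\text{character})$, equivalently when $\pi$ is a twist of a base-change from $\GL_{2/\QQ}$ (after absorbing $\tau$ into the twist). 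This contradicts the hypothesis.

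I expect the main obstacle to be the bookkeeping around the twist by $\tau$ and the central character: one must check that the twisted Asai $L$-function $L(\As(\pi)\otimes\tau, s)$ has a pole at a point in the region of absolute convergence if and only if $\pi \otimes (\text{something involving } \tau)$ is a twisted base-change, and that this is genuinely incompatible with the stated hypothesis on $\pi$ — in particular one must rule out the possibility that the pole is ``forced'' by $\tau$ alone rather than by $\pi$. This is handled by noting that the central-character normalisation ($\omega_\pi$ of finite order) together with the Gelbart--Jacquet classification of which $\GL_2$-automorphic representations have poles of the (twisted) Asai $L$-function pins down the exceptional case exactly as ``twist of a base-change from $\QQ$''. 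The remaining point — that a cofinite set of bad local factors really does produce a global pole, rather than the poles being cancelled by zeros elsewhere — follows because the Asai $L$-function, like any Rankin--Selberg $L$-function, is holomorphic and non-vanishing on $\Re(s) = 1$ (edge-of-convergence) except for the possible pole coming from distinction, so there is no room for cancellation; one then transports the pole to $s=0$ by the functional equation, or argues directly at the edge of the critical strip.
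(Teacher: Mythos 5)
The paper's own ``proof'' is a one-line citation to \cite[Proposition~7.2.5]{grossi19}, so there is no argument in the paper to compare against directly; the question is simply whether your proposed argument is sound. It is not, and the gaps are genuine rather than cosmetic.

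The first problem is the local implication ``$\Hom_{H_\ell}(\pi_\ell\otimes\tau_\ell,\triv)\neq 0 \Rightarrow L(\As(\pi_\ell)\otimes\tau_\ell,s)$ has a pole at $s=0$.'' This is false at split primes $\ell$, and split primes have density $1/2$. If $\ell$ splits in $K$ and $\pi_\ell\cong\sigma_1\boxtimes\sigma_2$ is an unramified principal series with $\tau_\ell$ unramified, then $\Hom_{H_\ell}(\pi_\ell\otimes\tau_\ell,\triv)=\Hom_{\GL_2(\QQ_\ell)}(\sigma_1\otimes\sigma_2\otimes\tau_\ell,\triv)$ is \emph{always} one-dimensional by Prasad's theorem (the triple-product $\varepsilon$-factor is automatically $+1$ for unramified principal series), yet the local Rankin--Selberg factor $L(\sigma_1\times\sigma_2\times\tau_\ell,s)$ generically has no pole at $s=0$. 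So a cofinite set of primes with $\Hom\neq 0$ does not give you a cofinite set of local poles; you would at best have to restrict to inert primes, invoke Ramanujan to ensure temperedness, and be careful about the difference between distinction by the mirabolic subgroup (what Anandavardhanan--Kable--Tandon actually pins down) and distinction by all of $\GL_2(\QQ_\ell)$.

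The second problem is more serious and does not go away by restricting to inert primes: ``local poles of the Euler factors at $s=0$ on a cofinite set $\Rightarrow$ global pole of the completed $L$-function at $s=0$'' is simply not a valid inference. The Riemann zeta function is the standard counterexample: every Euler factor $(1-p^{-s})^{-1}$ has a pole at $s=0$, yet $\zeta(s)$ is regular there. Poles of individual Euler factors at an interior point of the critical strip carry no information about poles of the product; Landau's theorem and the non-vanishing on $\Re(s)=1$ constrain behaviour only at the edge of convergence, and the functional equation relates the completed $L$-function at $s$ and $1-s$ but does not transport a ``pole of every local factor at $s=0$'' into a pole of the global function anywhere. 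Flicker's theory does relate a pole of $L(\As\pi,s)$ at $s=1$ to $\pi$ being \emph{globally} $\GL_2(\AA_\QQ)$-distinguished, but almost-everywhere local distinction does not imply global distinction, so the implication runs in the wrong direction for your purposes.

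The argument that actually works (and is what Grossi's cited proposition carries out) is Galois-theoretic rather than analytic: at an unramified inert prime, the non-vanishing of $\Hom_{H_\ell}(\pi_\ell\otimes\tau_\ell,\triv)$ forces an explicit polynomial relation between the Satake parameters of $\pi_\ell$, the value $\tau_\ell(\ell)$, and the Frobenius conjugate (via Matringe's classification of distinguished principal series, quoted in the proof of \cref{thm:asaiA}). If that relation held at all but finitely many inert primes, Chebotarev density applied to the compatible system of Galois representations attached to $\pi$ would upgrade it to an isomorphism of Galois representations of the form $\rho_\pi^\sigma\cong\rho_\pi^\vee\otimes(\mathrm{char})$ or $\rho_\pi\cong\rho_\pi^\sigma\otimes(\mathrm{char})$, which is precisely the statement that $\pi$ is a twist of a base-change from $\QQ$, contradicting the hypothesis. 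Your instinct that the statement should be forced by some global rigidity is correct, but the right global input is a density theorem on Frobenius eigenvalues, not a pole of the Asai $L$-function at $s=0$.
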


  \begin{proof}
   See \cite[Proposition 7.2.5]{grossi19}.
  \end{proof}

  There is a natural $H_f$-representation $\cO^\times(Y)_{\CC}$ of \emph{modular units}, where $Y$ is the infinite-level modular curve (the Shimura variety for $\GL_2$). Note that this representation is smooth, but not admissible. It fits into a long exact sequence
  \[ 0 \to (\QQ^{\mathrm{ab}})^\times \otimes \CC \to \cO^\times(Y)_{\CC} \to \cI^0(1) \oplus \bigoplus_{\eta \ne 1}\cI(\eta) \to 0,\]
  with $H_f$ acting on $(\QQ^{\mathrm{ab}})^\times$ via the Artin reciprocity map of class field theory, and the sum is over all even Dirichlet characters $\eta$.

  There is a canonical homorphism, the \emph{Asai--Flach map}, constructed in \cite{leiloefflerzerbes18} (building on several earlier works such as \cite{Kings-Higher-Regulators}):
  \[ \mathcal{AF}^{[\pi, k]}: \left( \pi \otimes \cO^\times(Y)_{\CC} \right)_{H_f} \to H^1(\QQ, V^{\mathrm{As}}(\pi)^*(-k)),  \]
  where $V^{\mathrm{As}}(\pi)$ is the Asai Galois representation attached to $\pi$, and we have fixed an isomorphism $\overline{\QQ}_p \cong \CC$. The subscript $H_f$ indicates $H_f$-coinvariants.

  \begin{theorem}
   Suppose $\pi$ is not a twist of a base-change from $\QQ$. Then the Asai--Flach map factors through $\pi \otimes \cI(\chi)$, and its image is contained in a 1-dimensional subspace of $H^1(\QQ, V^{\mathrm{As}}(\pi)^*(-k))$.
  \end{theorem}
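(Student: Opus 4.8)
The plan is to follow Harris--Scholl's argument for the Rankin--Eisenstein case (\cite[\S2]{harrisscholl01}): feed the exact sequence for $\cO^\times(Y)_\CC$ through the Asai--Flach map, and control the resulting $H_f$-coinvariants using the branching law of \cref{prop:semilocal} together with the non-vanishing input of \cref{prop:giada}.

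The first point is that $\mathcal{AF}^{[\pi,k]}$ factors through the quotient of $\cO^\times(Y)_\CC$ by the constant modular units $(\QQ^{\mathrm{ab}})^\times \otimes \CC$: this is intrinsic to its construction, since the Asai--Flach classes of \cite{leiloefflerzerbes18} are built from Eisenstein symbols (following \cite{Kings-Higher-Regulators}), which depend only on modular units \emph{modulo constants}. Applying the right-exact functor $(\pi \otimes -)_{H_f}$ to the exact sequence
\[ 0 \to (\QQ^{\mathrm{ab}})^\times \otimes \CC \to \cO^\times(Y)_{\CC} \to \cI^0(1) \oplus \bigoplus_{\eta \ne 1}\cI(\eta) \to 0, \]
we find that $\mathcal{AF}^{[\pi,k]}$ descends to a map out of
\[ (\pi \otimes \cI^0(1))_{H_f} \;\oplus\; \bigoplus_{\eta \ne 1} (\pi \otimes \cI(\eta))_{H_f} . \]

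Next I would use central characters to see that at most one of these summands is nonzero. Set $\chi = \omega_\pi^{-1}|_{\AA_f^\times}$, which is a finite-order Hecke character, and is even since $K$ has two real places (so that $\cI(\chi)$ does occur among the summands). The representation $\pi \otimes \cI(\eta)$ has central character $\omega_\pi|_{\AA_f^\times}\cdot\eta = \chi^{-1}\eta$ on $\AA_f^\times$, so it admits no nonzero $H_f$-invariant functional unless $\eta = \chi$; likewise $\pi \otimes \cI^0(1)$ has central character $\chi^{-1}$, which is nontrivial unless $\chi = 1$. Hence there are two cases.
\begin{itemize}
 \item If $\chi \ne 1$: only the $\cI(\chi)$-summand can be nonzero, and by the first bullet of \cref{prop:semilocal} the space $(\pi \otimes \cI(\chi))_{H_f}$ is one-dimensional.
 \item If $\chi = 1$: here I would invoke \cref{prop:giada} with $\tau = 1$; since $\pi$ is not a twist of a base-change from $\QQ$, there is a prime $\ell$ with $\Hom_{H_\ell}(\pi_\ell, \triv) = 0$, and the second bullet of \cref{prop:semilocal} then shows $(\pi \otimes \cI^0(1))_{H_f}$ is one-dimensional and is identified, via the restriction map, with $(\pi \otimes \cI(1))_{H_f}$.
\end{itemize}
In either case $\mathcal{AF}^{[\pi,k]}$ factors through $(\pi \otimes \cI(\chi))_{H_f}$, which is one-dimensional; therefore its image lies in a line in $H^1(\QQ, V^{\mathrm{As}}(\pi)^*(-k))$, proving the theorem.

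The step carrying the real content is the case $\chi = 1$: without the hypothesis on $\pi$, the third bullet of \cref{prop:semilocal} permits $\dim \Hom_{H_f}(\pi \otimes \cI^0(1), \triv) = \infty$ — precisely when $\Hom_{H_\ell}(\pi_\ell,\triv)\ne 0$ for every $\ell$ — and the conclusion would fail; this is exactly what \cref{prop:giada} rules out, using that $\pi$ is not a twist of a base-change from $\QQ$. The other point deserving care is the opening claim that the Eisenstein-symbol construction of \cite{leiloefflerzerbes18} genuinely produces a map factoring through $\pi \otimes \cI(\chi)$, and not merely through $\pi \otimes \cO^\times(Y)_\CC$.
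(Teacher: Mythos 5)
Your overall structure is close to the paper's and arrives at the correct conclusion, but the opening step is where the two arguments diverge, and it is exactly the point you yourself flag as ``deserving care.'' You assert that $\mathcal{AF}^{[\pi,k]}$ factors through $\cO^\times(Y)_\CC / (\QQ^{\mathrm{ab}})^\times\otimes\CC$ \emph{intrinsically}, because Eisenstein symbols supposedly only see modular units modulo constants. The paper does not rely on any such property of the geometric construction. Instead it applies \cref{prop:giada} a \emph{first} time, with $\tau$ running over all Dirichlet characters: since $(\QQ^{\mathrm{ab}})^\times\otimes\CC$ decomposes as a sum of characters of $H_f$ pulled back along the determinant, and for every such $\tau$ there are primes $\ell$ with $\Hom_{H_\ell}(\pi_\ell\otimes\tau_\ell,\triv)=0$, the coinvariants $\bigl(\pi\otimes(\QQ^{\mathrm{ab}})^\times\otimes\CC\bigr)_{H_f}$ are zero outright. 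So the vanishing of $\mathcal{AF}^{[\pi,k]}$ on the constants piece is automatic from pure representation theory, with no input from how the Asai--Flach map is actually built. This is both simpler and safer than your route, which would require justifying a nontrivial claim about the Eisenstein-symbol construction.

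The rest of your argument — the central-character filtering to single out the $\cI(\chi)$ (or $\cI^0(1)$) summand, the use of the first and second bullets of \cref{prop:semilocal}, and the second application of \cref{prop:giada} with $\tau=1$ in the case $\chi=1$ — matches the paper's proof. So the only substantive gap is that opening reduction; if you replace the ``intrinsic to the construction'' claim with the representation-theoretic vanishing via \cref{prop:giada}, your proof becomes essentially the paper's.
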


 \begin{proof}
  Using \cref{prop:giada}, we see that $\mathcal{AF}^{[\pi, k]}$ must vanish on $(\QQ^{\mathrm{ab}})^\times \otimes \CC$, so it factors through $\pi \otimes \cI(\chi)$ if $\chi \ne 1$, or $\pi \otimes \cI^0(\chi)$ if $\chi = 1$, where $\chi = (\omega_{\pi} |_{\AA_f^\times})^{-1}$ as above. Using \cref{prop:semilocal}, combined with a second application of \cref{prop:giada} if $\omega_{\pi}$ is trivial on $\QQ$, the result follows.
 \end{proof}

 As in the $\GSp_4$ case described in \cite[\S 6.6]{LZ20b-regulator}, one can remove the dependency on the test data entirely: using zeta-integrals, we can construct a canonical basis vector $Z_{\mathrm{can}} \in \Hom(\pi_f \otimes \cI(\chi), \triv)$, and define $\mathcal{AF}_{\mathrm{can}}^{[\pi, k]} \in H^1\left(\QQ, V^{\mathrm{As}}(\pi)^*(-k)\right)$ as the unique class such that $\mathcal{AF}^{[\pi, k]} = Z_{\mathrm{can}} \cdot \mathcal{AF}_{\mathrm{can}}^{[\pi, k]}$. We hope that this perspective may be useful in formulating and proving explicit reciprocity laws in the Asai setting.

 \begin{remark}
  The constructions of \cite{leiloefflerzerbes18} also apply to other twists of $V^{\mathrm{As}}(\pi)$, and to Hilbert modular forms of non-parallel weight; but in these other cases the input data for the Asai--Flach map lies in an irreducible principal series representation of $H_f$, so the necessary multiplicity-one results are standard. (The delicate cases are those which correspond to near-central values of $L$-series.)
 \end{remark}
 \providecommand{\bysame}{\leavevmode\hbox to3em{\hrulefill}\thinspace}
 \providecommand{\MR}[1]{}
 \renewcommand{\MR}[1]{%
  MR \href{http://www.ams.org/mathscinet-getitem?mr=#1}{#1}.
 }
 \providecommand{\href}[2]{#2}
 \newcommand{\articlehref}[2]{\href{#1}{#2}}

\end{document}